\DeclareMathOperator{\Disc}{Disc}
\DeclareMathOperator{\fibpizero}{fib\pi_0}
\newcommand{\cyclic}[1]{C_{#1}}
\title{The 3-strand braid group with torsion}
\author{Ethan Dlugie, Tahsin Saffat}
\date{September 2025}
\begin{document}

\maketitle

\begin{abstract}
    In the 1950s, H. S. M. Coxeter considered the quotients of braid groups given by adding the relation that all half Dehn twist generators have some fixed, finite order. He found a remarkable formula for the order of these groups in terms of some related Platonic solids. Despite the inspiring apparent connection between these ``truncated'' braid groups and Platonic solids, Coxeter's proof boils down to a finite case check that reveals nothing about the structure present. We give a topological interpretation of the truncated 3-strand braid group that makes the connection with Platonic solids clear. One of our key tools is a formalism for orbifolds developed by A. Henriques that we think others would find interesting.
\end{abstract}

\section{Introduction}\label{sec: introduction}
The braid groups $B_n$ are a ubiquitous family of groups in modern mathematics. These groups have ``half twist'' generators $\sigma_1,\dotsc,\sigma_{n-1}$ with braid relations between adjacent generators and commuting relations between non-adjacent generators. See for instance \cite{Gonzalez-Meneses2011BasicGroups} for an introduction to braid groups. \cref{fig:braid_example} shows a basic example of a 4-strand braid.

\begin{figure}
    \centering
    
    \begin{overpic}[
        page=1,
        viewport={0.6in 9.1in 3.2in 10.6in},
        clip, grid=false, scale=0.8
    ]{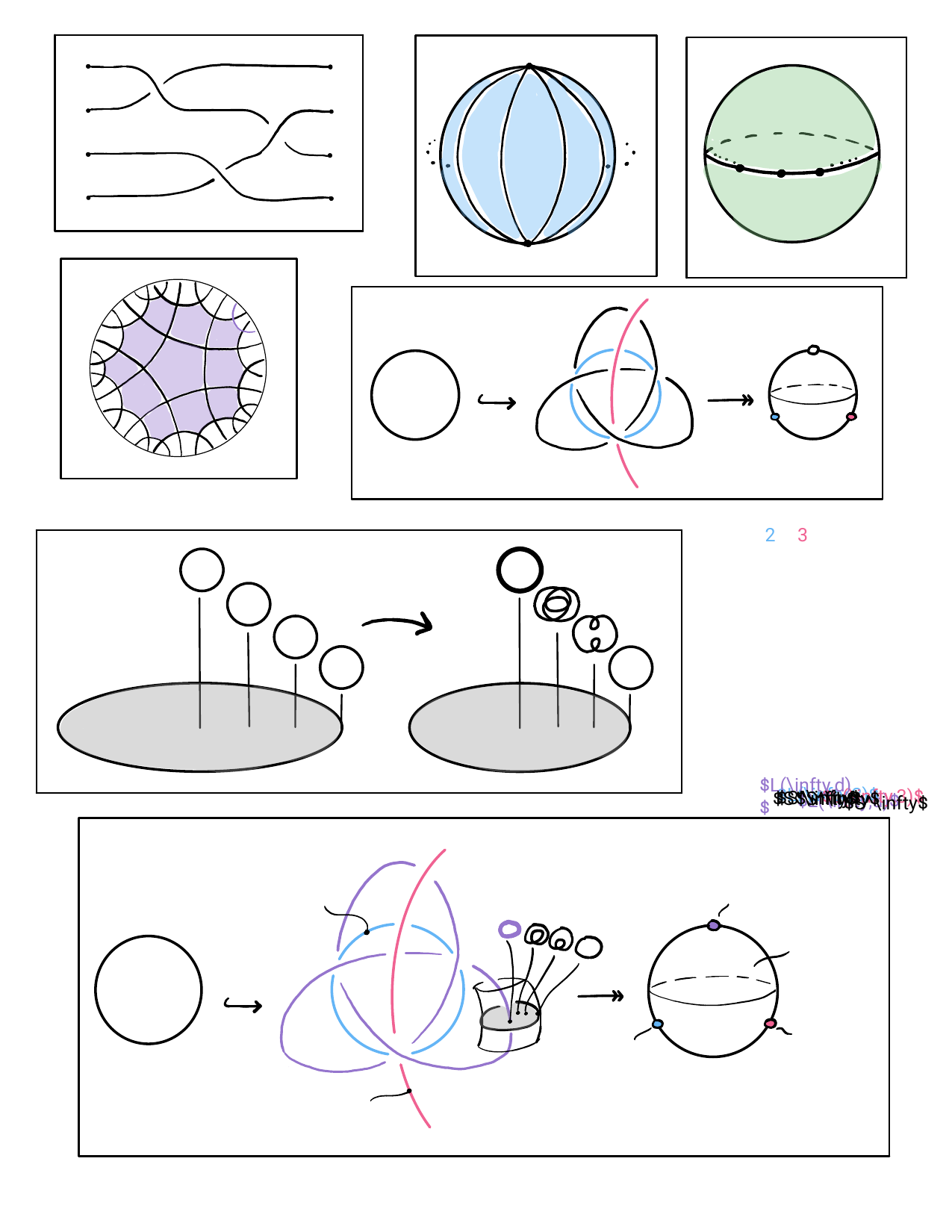}
        \put(1,50){$1$}
        \put(1,35){$2$}
        \put(1,20){$3$}
        \put(1,5){$4$}
        \put(26.5,34.5){$\sigma_1$}
        \put(49,5){$\sigma_3$}
        \put(67.5,35.5){$\sigma_2^{-1}$}
    \end{overpic}
    \caption[Braid example]{A depiction of the braid $\sigma_1\sigma_3\sigma_2^{-1} \in B_4$. The crossings considered from left-to-right match up with the word in the group read left-to-right.}
    \label{fig:braid_example}
\end{figure}
In this paper, we will be considering what we call the \textit{truncated braid groups}. Such a group, which we call $B_n(d)$, is the $n$-strand braid group with the extra relation that each generator $\sigma_i$ has a fixed finite order $d \geq 2$.
\begin{equation*}
    B_n(d) = \left\langle \sigma_1,\dotsc,\sigma_{n-1} \mid \sigma_i\sigma_{i+1}\sigma_i = \sigma_{i+1}\sigma_i\sigma_{i+1}, \sigma_i\sigma_j=\sigma_j\sigma_i \text{ if } |i-j| > 1, {\color{SquidPurple} \sigma_i^d=1} \right\rangle
\end{equation*}
These braid group quotients, and similar quotients for mapping class groups, have received attention in the last few decades in low-dimensional topology, geometric group theory, and quantum topology \cite{Humphries1992NormalGroups,Mangioni2025RigidityTwists,Dahmani2020DehnTwists,Funar2014OnUnity}.
Already over half a century ago, Coxeter considered the truncated braid groups \cite{Coxeter1959FactorGroup}. He found that $B_n(d)$ is finite in only a few exceptional cases. What is more, he discovered an incredibly remarkable formula for the order of the finite quotients.

\begin{theorem}[\cite{Coxeter1959FactorGroup}] \label{thm: Coxeter formula}
    The braid group quotient $B_n(d)$ is finite if and only if $n=2$, $d=2$ or the pair $(n,d)$ is one of the five exceptional pairs
    \begin{equation*}
        (3,3), (3,4), (3,5), (4,3), \text{ or } (5,3).
    \end{equation*}
    In such cases, the size of the quotient is given by the formula
    \begin{equation}
        | B_n(d) | = \left( \frac{f(n,d)}2 \right)^{n-1}n! \label{eq: Coxeter formula}
    \end{equation}
    where $f(n,d)$ is the number of faces in the Platonic solid made of $n$-gons with $d$ at each vertex.
\end{theorem}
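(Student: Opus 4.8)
The plan is to reduce everything to the $n=3$ cases --- the subject of this paper --- and to handle those through the orbifold geometry of the trefoil knot; the other cases are routine or classical. When $d=2$ the relations $\sigma_i^2=1$ turn the presentation into the Coxeter presentation of the symmetric group, so $B_n(2)\cong S_n$, of order $n!=(f(n,2)/2)^{n-1}n!$ since the dihedron $\{n,2\}$ has $f(n,2)=2$; likewise $B_2(d)\cong C_d$ matches the hosohedron $\{2,d\}$ with $f(2,d)=d$. For $n\ge 4$ I would invoke the Shephard--Todd classification: a finite $B_n(d)$ must be a finite complex reflection group, which leaves exactly $B_4(3)\cong G_{25}$ and $B_5(3)\cong G_{32}$ with the asserted orders, everything else being infinite (e.g.\ $B_4(d)$ already surjects onto $B_3(d)$ via $\sigma_3\mapsto\sigma_1$, hence is infinite once $B_3(d)$ is).

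For $n=3$, realize $B_3(d)$ as an orbifold fundamental group. Since $B_3\cong\pi_1\bigl(\mathbb C^2\setminus\{4p^3+27q^2=0\}\bigr)\cong\pi_1(S^3\setminus T_{2,3})$ with $\sigma_1$ a meridian of the trefoil $T_{2,3}$, imposing $\sigma_1^d=1$ is orbifold Dehn filling: $B_3(d)\cong\pi_1^{\mathrm{orb}}(\mathcal O_d)$, where $\mathcal O_d$ has underlying space $S^3$ and a cone singularity of angle $2\pi/d$ along $T_{2,3}$. The trefoil is a regular fiber of the Seifert fibration $S^3\to S^2(2,3)$, so $\mathcal O_d$ is Seifert fibered over $S^2(2,3,d)$, the cone locus producing the new cone point of order $d$; the class $z=[(\sigma_1\sigma_2)^3]$ of a regular fiber (the full twist) is central, and
\[
 B_3(d)/\langle z\rangle\;\cong\;\pi_1^{\mathrm{orb}}\bigl(S^2(2,3,d)\bigr)\;=\;\langle a,b\mid a^2=b^3=(ab)^d=1\rangle\;=\;\Delta(2,3,d),
\]
the $(2,3,d)$ von Dyck group, i.e.\ the rotation symmetry group of the Platonic solid $\{3,d\}$; it permutes the $f(3,d)$ triangular faces transitively with stabilizer the order-$3$ group of rotations of a face, so $|\Delta(2,3,d)|=3\,f(3,d)$.

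Now $\Delta(2,3,d)$ is infinite exactly when $\tfrac12+\tfrac13+\tfrac1d\le 1$, i.e.\ $d\ge 6$, so for $d\ge 6$ the surjection $B_3(d)\twoheadrightarrow\Delta(2,3,d)$ forces $B_3(d)$ infinite; for $d\le 5$ one has $\Delta(2,3,d)\cong S_3,A_4,S_4,A_5$, and $\mathcal O_d$ is Seifert fibered over a spherical base with nonzero Euler number, hence is a spherical orbifold and $B_3(d)$ is finite. To compute the order, pass to the $d$-fold cyclic cover of $S^3$ branched over $T_{2,3}$: this is the Brieskorn space form $\Sigma(2,3,d)$, corresponding to an index-$d$ subgroup, so $|B_3(d)|=d\,|\pi_1(\Sigma(2,3,d))|$. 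As a Seifert space form with first homology read off from the Alexander polynomial $t^2-t+1$ of the trefoil, $\Sigma(2,3,d)$ has $\pi_1\cong C_3,\,Q_8,\,2T,\,2I$ of orders $3,8,24,120$ for $d=2,3,4,5$. Hence $|B_3(d)|=6,24,96,600$; dividing by $|\Delta(2,3,d)|=3f(3,d)$ shows $z$ has order $f(3,d)/2$, so
\[
 |B_3(d)|=\tfrac{f(3,d)}2\cdot 3f(3,d)=6\Bigl(\tfrac{f(3,d)}2\Bigr)^2=\Bigl(\tfrac{f(3,d)}2\Bigr)^{3-1}\!3!,
\]
which is \eqref{eq: Coxeter formula}.

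The step I expect to be the genuine obstacle is this quantitative one: showing, with correct normalization, that $\mathcal O_d$ is a spherical orbifold for $d\le 5$ (equivalently, that its Seifert Euler number is $-\tfrac16$, so that $z$ has order exactly $f(3,d)/2$) and identifying the branched covers $\Sigma(2,3,d)$ with the named space forms --- and, beneath all of this, setting up the orbifold formalism (cone-angle orbifolds, their Seifert structures, orbifold covers, Euler numbers, and the slightly degenerate $d=2$ boundary case) cleanly enough to compute with. This is exactly where I would lean on Henriques's orbifold machinery rather than ad hoc arguments. By contrast, $B_3\cong\pi_1(S^3\setminus T_{2,3})$, the triangle-group quotient, and the spherical/Euclidean/hyperbolic trichotomy for $(2,3,d)$-triangle orbifolds are all classical.
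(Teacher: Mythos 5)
Your $n=3$ skeleton is the same as the paper's: realize $B_3$ as $\pi_1$ of the trefoil complement with $\sigma_i$ a meridian, interpret $\sigma_i^d=1$ as an orbifold filling of the trefoil, use the $(2,3)$-weighted Seifert fibration to get the central extension \eqref{eq: SES central quotient} with quotient $\Delta(2,3,d)$ of order $3f(3,d)$, and reduce everything to the order of the fiber class $\Delta^2$. You also correctly isolate the crux (the Euler number $-\tfrac16$, equivalently the order $f(3,d)/2$ of $\Delta^2$). Where you diverge is in how that crux is resolved. You compute $|B_3(d)|=d\,|\pi_1(\Sigma(2,3,d))|$ by passing to the $d$-fold cyclic branched cover and then identify $\pi_1(\Sigma(2,3,d))$ as $\cyclic{3},Q_8,2T,2I$ one value of $d$ at a time, which reimports exactly the kind of external case check (here, the classification of ADE links/spherical space forms) that the paper is written to avoid. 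The paper instead sets up the filled object as an honest $S^1$-bundle of orbispaces $\mathbb E\to\mathbb B$ in Henriques's Borel-construction formalism and computes the boundary map $\partial:\pi_2(\mathbb B)\to\pi_1(S^1)$ directly and uniformly in $d$: a generator of $\pi_2(\mathbb B)$ projects to a degree-$3f(3,d)$ branched cover of $\mathbb{CP}^1$ (counted by the triangles of the Platonic tiling), any two regular Seifert fibers in $S^3$ have linking number $6$, and comparing linking number with algebraic intersection number gives $6k=3f(3,d)$, i.e. $\Delta^2$ has order $f(3,d)/2$. That single computation is what makes the appearance of $f(3,d)$ structural rather than coincidental; your route gets the right numbers but the formula emerges only after the fact, by dividing.

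Two smaller points. First, your reduction for $n\ge 4$ has a genuine gap: the surjection $B_n(d)\twoheadrightarrow B_3(d)$ only establishes infiniteness when $d\ge 6$, and it does not touch $B_4(4)$, $B_4(5)$, $B_5(4)$, $B_6(3)$, etc.; and ``a finite $B_n(d)$ must be a finite complex reflection group'' is not automatic from the presentation --- one needs a faithful reflection representation, which is essentially the content of what is being proved. (The paper does not fill this in either; it proves only the $n=3$ case and cites Coxeter for the rest, so this is a gap relative to the theorem as stated, not relative to the paper.) Second, your finiteness argument for $d\le 5$ quietly assumes the Seifert Euler number of the filled orbifold is nonzero; that is again the same quantitative fact as the order of $\Delta^2$, so it should be proved, not assumed --- the paper's linking-number argument is one clean way to do it.
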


See \cref{sec: Coxeter special cases} for a brief explanation of how to interpret this formula in the various cases.

Coxeter's theorem hints at a tantalizing connection between braid groups and Platonic solids. So how does he prove that his formula counts what is claimed? Well, there are only five exceptional cases of finite $B_n(d)$. Each of these is a finitely presented group. Coxeter goes one-by-one and computes the order of $B_n(d)$ either by recognizing the presentation of some known finite group or by running some coset enumeration algorithms. Then he computes the value given by the formula in \eqref{eq: Coxeter formula} and checks that the values agree in all five cases.

If the reader is shocked at this proof, they are not alone. The present authors sought to find a more natural proof that connects the geometry and topology of braid groups with the geometry and combinatorics of Platonic solids. In the present work, we report on a new proof of \cref{thm: Coxeter formula} in the $n=3$ case that is purely topological. Specifically, we find the order of a central element in $B_3(d)$.

\begin{theorem}\label{thm: order of center}
    Let $\Delta^2 = (\sigma_1\sigma_2)^3 = (\sigma_1\sigma_2\sigma_1)^2$ denote the full twist braid in $B_3$. Then the order of $\Delta^2$ in $B_3(d)$ is $\frac{f(3,d)}{2}$. This value, and so the order of the full twist, is finite if and only if $2 \leq d \leq 5$
\end{theorem}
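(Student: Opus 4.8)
The plan is to replace the algebra by topology: realize $B_3(d)$ as the orbifold fundamental group of a Seifert fibered $3$-orbifold in which $\Delta^2$ is the class of a generic fibre, and then read off $\operatorname{ord}(\Delta^2)$ from the Euler number of that fibration together with the spherical geometry that appears precisely when $d\le 5$. Concretely, I would start from the classical identification $B_3\cong\pi_1(S^3\setminus K)$ for $K$ the trefoil, under which the half twists $\sigma_i$ are meridians of $K$ and $\Delta^2=(\sigma_1\sigma_2)^3$ generates the infinite cyclic centre. Since $K$ is the $(2,3)$-torus knot it is a generic fibre of the Seifert fibration of $S^3$ over the orbifold $S^2(2,3)$, so $\Delta^2$ is carried by a generic fibre. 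Because $\sigma_1$ is a meridian, $B_3(d)=B_3/\langle\!\langle\sigma_1^d\rangle\!\rangle$ is the orbifold fundamental group of the orbifold $\mathcal O_d$ obtained from $S^3$ by turning $K$ into a cone locus of order $d$; coning off a generic fibre keeps the fibration, so $\mathcal O_d$ is Seifert fibered over $S^2(2,3,d)$ with $\Delta^2$ still the generic fibre. Making the ``cone off a fibre'' operation precise, and bookkeeping the resulting (orbifold) Seifert data, is where I expect Henriques's formalism to earn its keep.

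From the Seifert structure one gets a central extension $1\to\langle\Delta^2\rangle\to B_3(d)\to\pi_1^{\mathrm{orb}}(S^2(2,3,d))\to 1$, whose base group is the orientation-preserving $(2,3,d)$ triangle (von Dyck) group $D(2,3,d)=\langle a,b\mid a^2=b^3=(ab)^d=1\rangle$. I would double-check this algebraically from $B_3=\langle u,v\mid u^2=v^3\rangle$ with $u=\sigma_1\sigma_2\sigma_1$, $v=\sigma_1\sigma_2$, $\sigma_1=v^{-1}u$, $\Delta^2=u^2=v^3$: imposing $\Delta^2=\sigma_1^d=1$ gives exactly $\langle u,v\mid u^2=v^3=(v^{-1}u)^d=1\rangle\cong D(2,3,d)$. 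Since the normal closure of the central element $\Delta^2$ is just $\langle\Delta^2\rangle$, the kernel above is cyclic of order $\operatorname{ord}_{B_3(d)}(\Delta^2)$, and the theory of Seifert fibered $3$-orbifolds says this order is finite exactly when the Euler number is nonzero and the base $S^2(2,3,d)$ is spherical, i.e.\ $\tfrac12+\tfrac13+\tfrac1d>1$, i.e.\ $2\le d\le 5$; for $d\ge 6$ the base is Euclidean or hyperbolic, $\mathcal O_d$ carries a $\mathrm{Nil}$- or $\widetilde{\mathrm{SL}_2}$-structure, and the fibre has infinite order. This already proves the finiteness assertion of \cref{thm: order of center}.

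It remains to pin the value when $d\le 5$. First I would compute the Euler number: passing from $S^3$ to $\mathcal O_d$ only adds orbifold structure along a generic fibre and does no surgery, so $e(\mathcal O_d)$ equals the Euler number $\pm\tfrac16$ of the $(2,3)$-fibration of $S^3$ (the new fibre is a $(d,0)$-fibre, contributing $0$). As a sanity check, $|H_1|=|\alpha_1\alpha_2\alpha_3\,e|$ gives $|H_1(B_3(d))|=|6d\cdot\tfrac16|=d$, matching $B_3(d)^{\mathrm{ab}}=\mathbb Z/d$. Now for $d\le 5$, $\mathcal O_d=S^3/\Gamma$ with $\Gamma=B_3(d)$ preserving a Hopf fibration over the spherical orbifold $S^2(2,3,d)=S^2/\bar\Gamma$, where $\bar\Gamma=D(2,3,d)\subset\mathrm{SO}(3)$ is the rotation group of the Platonic solid with triangular faces, $d$ at each vertex; comparing the descended curvature form with the fibre length (equivalently, comparing $\mathcal O_d$ with the unit tangent bundle $UT(S^2(2,3,d))$, whose Euler number is $\chi^{\mathrm{orb}}$ and whose fibre has order $2$) yields $\tfrac16=\operatorname{ord}(\Delta^2)/|\bar\Gamma|$, so $\operatorname{ord}_{B_3(d)}(\Delta^2)=\tfrac16|D(2,3,d)|$. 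Finally, since $D(2,3,d)$ acts transitively on the $f(3,d)$ triangular faces of the solid with face-stabilizers of order $3$, we have $|D(2,3,d)|=3\,f(3,d)$, hence $\operatorname{ord}_{B_3(d)}(\Delta^2)=\tfrac{f(3,d)}{2}$, as claimed; as a bonus $|B_3(d)|=\operatorname{ord}(\Delta^2)\cdot|D(2,3,d)|=\bigl(\tfrac{f(3,d)}{2}\bigr)^2\cdot 6$, which is Coxeter's formula for $n=3$.

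The main obstacle is geometric rather than algebraic: setting up $\mathcal O_d$ and its Seifert fibration rigorously as an orbifold (including the degenerate $(d,0)$-fibre, where the cone locus is itself a fibre), justifying $|e(\mathcal O_d)|=\tfrac16$, and establishing the identity $\operatorname{ord}(\Delta^2)=|e(\mathcal O_d)|\cdot|D(2,3,d)|$ from the spherical structure — exactly the manipulations that Henriques's orbifold framework is built to handle cleanly.
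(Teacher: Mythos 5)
Your proposal is correct and shares the paper's overall architecture (trefoil complement, the weighted Seifert fibration of $S^3$, coning the trefoil to order $d$, the central extension over the $(2,3,d)$ triangle orbifold, finiteness from sphericity of the base), but it computes the order of $\Delta^2$ by a genuinely different route. Where you invoke the Euler number $e=\pm\frac16$ of the Seifert fibration, its invariance under adding the cone locus along a generic fibre, and the identity ``order of the fibre class $=|e|\cdot|\pi_1^{\mathrm{orb}}(\text{base})|$'' from the general theory of Seifert fibered orbifolds, the paper instead works entirely inside Henriques's Borel-construction model, where $\mathbb E\to\mathbb B$ is an honest $S^1$-bundle, and computes the boundary map $\partial:\pi_2(\mathbb B)\to\pi_1(S^1)$ of the resulting long exact sequence by hand: the generator of $\pi_2(\mathbb B)\approx\pi_2(S^2)$ projects to $B\approx\mathbb{CP}^1$ with degree $3f(3,d)$ (the degree of the universal branched cover, established by an explicit quadrilateral tiling in \cref{prop: order of triangle group}), and the boundary of its lift wraps $k$ times around a regular fibre, where $6k=3f(3,d)$ because two regular orbits link $6$ times (\cref{lemma: linking numbers}). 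The two computations are two faces of the same fact --- the linking number $6$ is exactly $1/|e|$, and $3f(3,d)=|\Delta(2,3,d)|$ --- but they buy different things: your version is shorter if one grants the Seifert-orbifold machinery (Euler numbers of orbifold fibrations, their multiplicativity under orbifold covers, the geometric trichotomy for $d\ge 6$), which is precisely the bookkeeping the paper is trying to avoid; the paper's version is self-contained, needing only the long exact sequence of a genuine fibre bundle plus an elementary linking-number count, at the cost of setting up the orbispace gluing in \cref{sec: orbispaces}. The one step you should not leave as a black box is the identity relating $|e|$ to the order of the fibre class: the clean justification is to pull the fibration back along the degree-$N$ orbifold cover $S^2\to S^2(2,3,d)$, where the Euler number becomes $Ne=f(3,d)/2$, observe that the resulting honest circle bundle over $S^2$ has fibre class of order $|Ne|$ in its fundamental group, and note that a generic fibre maps homeomorphically to a generic fibre downstairs while $\pi_1$ of a cover injects; with that spelled out your argument is complete.
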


The quotient by the central element $B_3(d) / \langle \Delta^2 \rangle$ is isomorphic to the (orientation-preserving) triangle group $\Delta(2,3,d)$, giving the short exact sequence
\begin{equation}
    1 \to \langle \Delta^2 \rangle \to B_3(d) \to \Delta(2,3,d) \to 1 \label{eq: SES central quotient}
\end{equation}
Already in the 19th century mathematicians saw the connection between the abstract triangle groups and the Platonic solids \cite{Fricke1897VorlesungenFunctionen}. It is classical that the order of $\Delta(2,3,d)$ is given by $3 \cdot f(3,d)$, and so the classical result together with \cref{thm: order of center} yields Coxeter's formula for the 3-strand braid group quotients. This is summarized later as \cref{cor: Coxeter's formula in n=3}.

Our proof of \cref{thm: order of center} is purely topological. It makes use of the trefoil knot complement (\cref{sec: braids and trefoil}), the topology of orbifolds (\cref{sec: orbispaces}), and a long exact sequence of homotopy groups for an orbifold fibration (\cref{sec: long exact sequence analysis}). We found that the standard treatments of orbifolds -- by an atlas of charts with group actions, by Lie groupoids, or by topological stacks -- were too cumbersome in various ways to allow us to use the tools we wanted to employ. In \cref{sec: orbispaces} we explain the \textit{Borel construction} employed in \cite{Henriques2001OrbispacesDefinition}, which allows us in a way to simply treat orbifolds as topological spaces. This allows us to employ all of the standard tools of algebraic topology with no alterations. We employ these tools to prove \cref{thm: order of center} in \cref{sec: long exact sequence analysis}. Finally, \cref{sec: final remarks} contains some closing thoughts about our techniques.

\section{Special cases of Coxeter's formula}\label{sec: Coxeter special cases}
We take a moment to explain how to interpret Coxeter's formula \eqref{eq: Coxeter formula} in the cases when it is not immediately obvious, i.e. in the non-sporadic finite cases and in the generic infinite case. The key is to think of a Platonic solid not as a Euclidean polyhedron but as a regular tiling of the sphere.

\begin{example}[The $n=2$ case] Here we have that the group $$B_2(d) = \langle \sigma_1 \mid \sigma_1^d \rangle$$ is just a finite cyclic group of order $d$. To interpret the quantity $f(2,d)$, we seek a tiling of the sphere by 2-gons with $d$ of them at each vertex. This is just the decomposition of the sphere by $d$-many equally spaced meridional arcs, as in \cref{fig:meridional_tiling}.

\begin{figure}
    \begin{minipage}{0.48\linewidth}
    \centering
        {\includegraphics[page=1,
            viewport={3.8in 8.6in 5.8in 10.6in},scale=0.9,
            clip]{figures.pdf}}
        \caption[Bigon tiling]{A tiling of the sphere by 2-gons cut out by meridional arcs.}
        \label{fig:meridional_tiling}
    \end{minipage}
    \hfill
    \begin{minipage}{0.48\linewidth}
    \centering
        {\includegraphics[page=1,
            viewport={6.2in 8.6in 8in 10.6in},scale=0.9,
            clip]{figures.pdf}}
        \caption[Equatorial tiling]{A tiling of the sphere by two congruent spherical polygons meeting at the equator.}
        \label{fig:equatorial_tiling}
    \end{minipage}
\end{figure}

There are $d$-many 2-gons in this tiling, so Coxeter's formula gives
\begin{equation*}
    \left( \frac{f(n,d)}2 \right)^{n-1}n! = \left( \frac{(d)}{2} \right)^{(2)-1} 2! = d,
\end{equation*}
which is indeed the order of the finite cyclic group $B_2(d)$.
\end{example}

\begin{example}[The $d=2$ case] Group theorists will quickly recognize from the presentation that $B_n(2)$ is isomorphic to the symmetric group on $n$-elements. The quotient map $B_n \to B_n(2)$ is exactly the canonical map that sends a braid to the permutation it induces on its strands.

For Coxeter's formula, we seek a tiling of the sphere by $n$-gons with 2 at each vertex. The picture here is a simple splitting of the sphere by a great circle, with $n$ equally spaced vertices along the great circle, as depicted in \cref{fig:equatorial_tiling}. Now $f(n,2)=2$ and Coxeter's formula yields
\begin{equation*}
    \left( \frac{f(n,d)}2 \right)^{n-1}n! = \left( \frac{(2)}2 \right)^{n-1} n! = n!,
\end{equation*}
which is famously the order of the symmetric group.

\end{example}

\begin{example}[An infinite case] Consider the group $B_5(4)$. To compute the number $f(5,4)$, we seek a tiling composed of regular pentagons with 4 of them at each vertex. This is impossible on the sphere, but it is possible as a tiling of the hyperbolic plane as in \cref{fig:hyperbolic_tiling}.

\begin{figure}
    \centering
    {\includegraphics[page=1,
        viewport={0.6in 6.8in 2.6in 8.6in},
        clip]{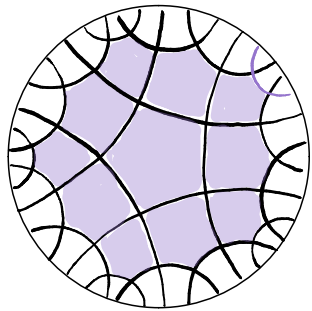}}
    \caption[Hyperbolic tiling]{A portion of the regular tiling of the hyperbolic plane by pentagons with 4 pentagons at each vertex.}
    \label{fig:hyperbolic_tiling}
\end{figure}

Evidently then $f(5,4) = \infty$, which agrees with Coxeter's assertion that $B_5(4)$ is an infinite group.
\end{example}

It becomes clear now that all of the cases when $B_n(d)$ is infinite correspond to the regular tilings of the Euclidean or hyperbolic planes. This point of view even works for the $d=\infty$ case (i.e. the usual braid group) for which one would take a tiling of the hyperbolic plane by regular \textit{ideal} $n$-gons.

\section{The 3-strand braid group and the trefoil knot}\label{sec: braids and trefoil}
In this section we explain the connection between the 3-strand braid group and the trefoil knot, a classical story. We will upgrade this to a connection between the truncated braid group $B_3(d)$ and an orbifold in the next section. We collect the necessary facts in a sequence of propositions.

\begin{proposition}\label{prop: braid group is trefoil complement}
    The 3-strand braid group is isomorphic to the fundamental group of the trefoil knot complement.
\end{proposition}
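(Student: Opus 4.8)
The plan is to exhibit the trefoil knot as the closure of a specific braid and use the standard relationship between braid closures, fibered links, and open book decompositions — or, more elementarily, to give a direct presentation of both groups and match them. I would carry this out as follows.

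\textbf{Step 1: Standard presentations.} First I would recall that the $3$-strand braid group has the one-relator presentation $B_3 = \langle \sigma_1, \sigma_2 \mid \sigma_1\sigma_2\sigma_1 = \sigma_2\sigma_1\sigma_2 \rangle$, and then change coordinates. Setting $a = \sigma_1\sigma_2\sigma_1$ and $b = \sigma_1\sigma_2$, the braid relation becomes $a^2 = b^3$, so $B_3 \cong \langle a, b \mid a^2 = b^3 \rangle$. On the other side, I would recall (e.g.\ via the Wirtinger presentation of the trefoil, or from the fact that the trefoil is the $(2,3)$-torus knot) that $\pi_1$ of the trefoil complement also has the presentation $\langle a, b \mid a^2 = b^3 \rangle$. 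Matching these two presentations already proves the proposition as an abstract isomorphism.

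\textbf{Step 2: A conceptual reason (the one I would actually feature).} Rather than leaving it as a coincidence of presentations, I would present the geometric picture, since it is what powers the rest of the paper. The trefoil is a fibered knot: its complement fibers over $S^1$ with fiber a once-punctured torus $\Sigma_{1,1}$ and monodromy $\varphi$, the periodic mapping class of order $6$ (equivalently, the complement is the mapping torus of $\varphi$). This gives a short exact sequence $1 \to \pi_1(\Sigma_{1,1}) \to \pi_1(\text{trefoil complement}) \to \mathbb{Z} \to 1$. On the braid side, the disk with $3$ marked points $D_3$ has mapping class group $\mathrm{MCG}(D_3) \cong B_3$, and the center of $B_3$ is generated by the full twist $\Delta^2$; one checks $B_3 / \langle \Delta^2 \rangle$ acts on $\Sigma_{1,1}$ as the hyperelliptic double cover of $D_3$ identifies $\mathrm{MCG}(\Sigma_{1,1})$-data with $\mathrm{MCG}(D_3)$-data, and $\Delta^2$ maps to the monodromy. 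Threading these identifications together realizes the isomorphism of Step 1 geometrically: $B_3 \cong \pi_1$ of the mapping torus of the order-$6$ map, which is the trefoil complement.

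\textbf{Main obstacle.} The genuinely delicate point is not the presentation bookkeeping but getting the central/abelianization data to line up cleanly — i.e.\ making precise that the central $\mathbb{Z} = \langle \Delta^2 \rangle \leq B_3$ corresponds exactly to the $\mathbb{Z}$ of the fibration (the "Seifert fiber" direction / meridian-longitude data) of the trefoil complement, with the quotient being the expected $\mathrm{PSL}_2(\mathbb{Z}) = \mathbb{Z}/2 * \mathbb{Z}/3$ triangle-group-type object. This is exactly the structure needed for the exact sequence \eqref{eq: SES central quotient} later, so I would be careful to phrase Step 2 so that the identification of $\langle \Delta^2 \rangle$ with the fiber-direction $\mathbb{Z}$ is explicit. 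Everything else — the Wirtinger computation for the torus knot, the coordinate change on $B_3$ — is routine and I would relegate it to a citation or a one-line check.
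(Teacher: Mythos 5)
Your proposal is correct, but it takes a genuinely different route from the paper. You prove the abstract isomorphism by matching the presentations $\langle a,b \mid a^2=b^3\rangle$ (change of generators $a=\sigma_1\sigma_2\sigma_1$, $b=\sigma_1\sigma_2$ on one side; Wirtinger or the $(2,3)$-torus-knot presentation on the other), optionally upgraded by the fibered-knot picture. The paper instead identifies the configuration space of $3$ unlabeled points in $\mathbb C$ with the space of monic cubics $z^3+az+b$ with nonzero discriminant, normalizes to $|a|^2+|b|^2=1$, and observes that the discriminant locus $-4a^3-27b^2=0$ cuts a trefoil out of $S^3$; so $B_3$ is literally $\pi_1$ of an explicit trefoil complement $E_0\subset S^3$. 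Your route is shorter and more elementary for the proposition as stated, but the paper's route is not just a proof: it produces the concrete model $E_0$, the weighted circle action $\zeta\cdot(a,b)=(\zeta^2a,\zeta^3b)$, and the identification of meridians with half-twist generators, all of which the rest of the argument depends on. If you used your proof, you would still need to build that model separately. One small caution on your Step 2: the $\mathbb Z$ of the fibration over $S^1$ (a \emph{quotient} of $\pi_1$, with kernel $\pi_1(\Sigma_{1,1})\cong F_2$) is not the same as the central $\mathbb Z=\langle\Delta^2\rangle$ (a \emph{subgroup}, namely $\pi_1$ of the generic Seifert fiber); your phrasing conflates these, and the sentence ``$\Delta^2$ maps to the monodromy'' is not literally meaningful since the monodromy is a mapping class rather than an element of $\pi_1$. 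The correct statements are that $\Delta^2=a^2=b^3$ generates the center and maps to $6\in\mathbb Z$ under the fibration map, and that it generates $\pi_1$ of the Seifert fiber; you correctly flag this alignment as the delicate point, and Step 1 alone already suffices to prove the proposition.
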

\begin{proof}
    There are several ways to see this result; we take the route of configuration spaces. Recall that the braid group is the fundamental group of the configuration space of $n$ distinct, unlabeled points in the plane. We think of these points as complex numbers $z_1,\dotsc,z_n \in \mathbb C$. By translating and dilating a collection of such points, we find that a homotopy equivalent configuration space is that for which the points have center of mass at the origin and which satisfy a chosen equation on their norms. For our $n=3$ case then, we will recognize the 3-strand braid group as the fundamental group of the space
    \begin{equation*}
        \left\{ \{z_1,z_2,z_3\} \subset \mathbb C \mid z_i \ne z_j, z_1+z_2+z_3=0, |z_1z_2+z_2z_3+z_3z_1|^2 + |z_1z_2z_3|^2 = 1 \right\}.
    \end{equation*}
    The last condition may seem strange, but it becomes more natural if we think of the three complex numbers as the roots of a monic cubic polynomial. Then our space is
    \begin{equation*}
        \{ z^3 + az+b \mid \Disc_z(z^3+az+b) \ne 0, |a|^2+|b|^2 = 1\}.
    \end{equation*}
    where $\Disc_z(p(z))$ is the discriminant of a polynomial $p$. In this case, this finally lets us see the space as
    \begin{equation}
        \{(a,b) \in \mathbb C^2 \mid |a|^2+|b|^2 = 1, -4a^3-27b^2 \ne 0 \} \subset S^3. \label{eq:braid group variety complement}
    \end{equation}
    The space here is the subset of the unit 3-sphere $S^3 \subset \mathbb C^2$ that lies outside of some polynomial variety. The particular polynomial here cuts out a $(2,3)$-torus knot, better known as the trefoil.
\end{proof}

To go along with later notation, we give the following name to the space encountered in this proof.

\begin{definition}\label{def:total space underlying spaces}
    Write $E_0 = \{(a,b) \in C^2 \mid |a|^2+|b|^2=1, -4a^3-27b^2 \ne 0\}$ for the subset unit 3-sphere in $\mathbb C^2$ away from the discriminant locus.
\end{definition}
\begin{remark}
    We would also write $E \subset \mathbb C$ for the entire 3-sphere, but we will construct $E$ later as a space glued from $E_0$ and a solid torus. We will recognize this glued space as the 3-sphere in \cref{lemma: E is the 3-sphere}.
\end{remark}

\begin{proposition}\label{prop: meridians are generators}
    Any meridional loop around the trefoil knot represents the conjugacy class of a $\sigma_i$ half-twist generator of $B_3$.
\end{proposition}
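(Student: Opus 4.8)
The plan is to read a meridian through the configuration-space description of $E_0$ provided by \cref{prop: braid group is trefoil complement}: a point of the trefoil is a cubic with a repeated root, approaching it means letting two of the three points collide, and a small loop around it is the resulting monodromy, which is a half-twist. First I would recall that under the identification of \cref{def:total space underlying spaces} the trefoil $K$ is the locus $\{-4a^3-27b^2=0\}$ inside the unit sphere of monic cubics $z^3+az+b$. The gradient of $-4a^3-27b^2$ vanishes only at $a=b=0$, which is not on $S^3$; equivalently the only such cubic with a triple root is $z^3$, excluded. Hence every point of $K$ corresponds to a cubic $(z-w)^2(z-u)$ with $w\ne u$, a single double root together with a simple root, and $K$ is a smooth $1$-manifold (indeed the $(2,3)$-torus knot, as already noted).

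For the computation it is convenient to work in $\mathbb C^2\setminus\{-4a^3-27b^2=0\}$, the space of all monic cubics with distinct roots, which deformation retracts onto $E_0$ along the weighted radial flow $(a,b)\mapsto(e^{2t}a,e^{3t}b)$ and so has the same fundamental group $B_3$; there the defining polynomial is complex-analytic and $\{\Disc=0\}$ is a smooth hypersurface away from the origin. Fix a point $p\in K$ of the generic type above and a small normal disk $D$ to $\{\Disc=0\}$ at $p$, so that $\mu:=\partial D$ is a meridian; since $K$ is connected, every meridian is conjugate in $\pi_1(E_0)\cong B_3$ to this one, and it suffices to identify the class of $\mu$. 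Near $p$, a cubic whose roots lie near $\{w,w,u\}$ factors uniquely as $q(z)\,(z-\tilde u)$ with $q$ the monic quadratic on the two roots near $w$ and $\tilde u$ near $u$, and ``cubic $\mapsto(q,\tilde u)$'' is a biholomorphism of a neighborhood of $p$ onto a neighborhood of $\bigl((z-w)^2,u\bigr)$ in $\{\text{monic quadratics}\}\times\mathbb C$. Since $\Disc_z\bigl(q(z)(z-\tilde u)\bigr)=\Disc_z q\cdot q(\tilde u)^2$ and $q(\tilde u)\approx(w-u)^2\ne0$, this map carries $\{\Disc=0\}$ to $\{\Disc_z q=0\}\times\mathbb C$. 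On the first factor the discriminant complement is $\mathbb C^\ast$ via $q\mapsto\Disc_z q$, whose $\pi_1=\mathbb Z$ is $B_2=\pi_1(\mathrm{Conf}_2(\mathbb C))$, generated by the half-twist $\sigma$ in which the two roots swap. Pulling back, $\mu$ is the braid in which the two strands near $w$ perform one half-twist while the strand near $u$ is fixed; this lies in the conjugacy class of $\sigma_1$, which equals that of $\sigma_2$ in $B_3$, as asserted. Matching orientations of $K$, of $S^3$, and of the half-twist shows the positively oriented meridian gives $\sigma_i$ rather than $\sigma_i^{-1}$.

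The main obstacle I expect is simply the bookkeeping in this local model: checking that the ``split off the near-$w$ quadratic factor'' map is a genuine local diffeomorphism sending the discriminant to the product shown, and then matching the normal disk $D$ with the meridian disk of the quadratic factor so that the half-twist comes out with the correct sign. Everything else is soft (connectedness of $K$ for conjugacy, $B_2\cong\mathbb Z$, conjugacy of $\sigma_1$ and $\sigma_2$). A more algebraic alternative would invoke the standard presentation $\pi_1(S^3\setminus K)\cong\langle x,y\mid x^2=y^3\rangle$, in which a meridian is conjugate to $x^{-1}y$, and match $x=\sigma_1\sigma_2\sigma_1$, $y=\sigma_1\sigma_2$ (so that $x^2=y^3=\Delta^2$ and $\langle x,y\rangle=B_3$) to obtain $x^{-1}y=\sigma_1^{-1}$; but identifying which word is the meridian in that presentation ultimately relies on the same local picture, so I would favor the direct argument.
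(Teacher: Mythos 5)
Your proof is correct, but it takes a genuinely different route from the paper. The paper's proof exhibits one explicit meridian: the concrete family of cubics $p_t(z)=(z+1+\exp(i\pi t))(z+1-\exp(i\pi t))(z-1)$, which visibly performs a right-handed half twist of two roots, and then verifies via a computer-generated picture (\cref{fig:trefoil_meridian}) that the loop it traces in $S^3$ really is a meridian of the discriminant locus; free homotopy of all meridians does the rest. You instead argue locally and intrinsically: at a generic point of the trefoil the cubic has exactly one double root, the ``split off the quadratic factor'' map is a local biholomorphism carrying the discriminant to $\{\Disc_z q=0\}\times\mathbb C$ (via $\Disc(q\cdot(z-\tilde u))=\Disc_z q\cdot q(\tilde u)^2$), and the meridian therefore realizes the generator of $\pi_1$ of the $B_2$-factor, i.e.\ a half twist of the two colliding strands. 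Both arguments are sound. Yours has the advantage of not resting on a figure, and --- more importantly for this paper --- it is exactly the argument that generalizes to $B_n$ (generic points of the discriminant always correspond to a single pair of colliding roots), which is relevant to the program sketched in \cref{sec: final remarks}. The paper's explicit family has the complementary advantage that the sign/orientation of the half twist is manifest, whereas in your version the orientation matching is the one step you defer to ``bookkeeping''; if you write this up, that check (that the positively oriented meridian of the oriented trefoil in the oriented $S^3$ pulls back to the positive generator of $\pi_1(\mathbb C^\ast)$ under $q\mapsto\Disc_z q$) is the only place where real care is needed.
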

\begin{proof}
    All meridional loops around the knot are freely homotopic (up to orientation), and so they represent the same conjugacy class in the fundamental group. One such loop can be taken as the family of polynomials $p_t(z) = (z+1+1\exp(i\pi t))(z+1-1\exp(i\pi t))(z-1)$ (with the roots rescaled so that the coefficients satisfy the norm condition in \eqref{eq:braid group variety complement}). See \cref{fig:trefoil_meridian} to see that this loop is indeed a meridian of the trefoil knot. This family of polynomials represents the braid generator $\sigma_2$ that swaps two points with a right-handed half twist, which gives the proposition.

    \begin{figure}
        \centering
        \includegraphics[width=0.4\linewidth]{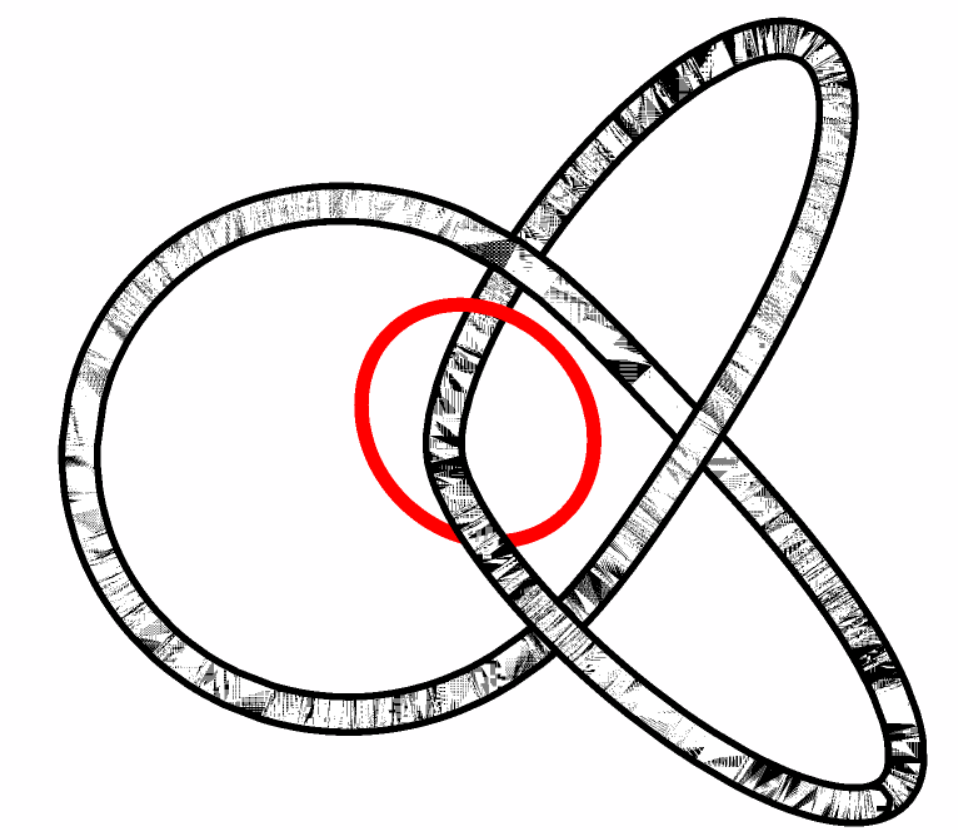}
        \caption{A computer-generated image of the discriminant locus (black/white) in $S^3 \subset \mathbb C^2$ and the loop (red) traced out by a family of polynomials whose roots swap with a half Dehn twist. We see that the discriminant locus cuts out a trefoil knot in the 3-sphere and the polynomial family traces out a meridional loop.}
        \label{fig:trefoil_meridian}
    \end{figure}
\end{proof}

\begin{proposition}\label{prop: trefoil Seifert structure}
    The trefoil complement has an oriented Seifert fiber structure. The loop traced by the generic orbit represents the central twist $\Delta^2 \in B_3$.
\end{proposition}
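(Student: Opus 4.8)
The plan is to realize the Seifert fibration explicitly as a weighted circle action on the space $E_0$ of \cref{def:total space underlying spaces}, and then to read off the generic orbit as a motion of the roots of the associated cubic. Concretely, I would let $S^1 = \{\lambda \in \mathbb{C} : |\lambda| = 1\}$ act on $\mathbb{C}^2$ by $\lambda \cdot (a,b) = (\lambda^2 a, \lambda^3 b)$. First I would check that this action descends to $E_0$: since $|\lambda| = 1$ it preserves the sphere $|a|^2 + |b|^2 = 1$, and since $-4(\lambda^2 a)^3 - 27(\lambda^3 b)^2 = \lambda^6(-4a^3 - 27b^2)$ it preserves the discriminant locus, hence its complement. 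The orbits are circles, so this is a Seifert fibration, and orienting every fiber by the direction of the $S^1$-action makes it an oriented Seifert fibration. A short stabilizer computation then locates the exceptional fibers: a point $(\lambda_0, 0)$ on the unit circle has stabilizer $\{\pm 1\}$, giving one exceptional fiber of order $2$, while $(0, \lambda_0)$ has stabilizer the group of cube roots of unity, giving one exceptional fiber of order $3$; all other orbits are free. (This matches the classical fact that the complement of the $(2,3)$-torus knot is Seifert fibered over a disk with two cone points of orders $2$ and $3$, which one could alternatively simply quote, but the explicit action is what connects cleanly to the braid picture.)

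For the second statement, I would fix a generic $(a,b) \in E_0$ with $a, b \neq 0$, so that its orbit is a free circle parametrized faithfully by $\lambda \mapsto (\lambda^2 a, \lambda^3 b)$ as $\lambda$ runs once around $S^1$ (faithfully because $\lambda^2 a = a$ and $\lambda^3 b = b$ force $\lambda = 1$). Under the correspondence of \cref{prop: braid group is trefoil complement}, the point $(\lambda^2 a, \lambda^3 b)$ is the cubic $z^3 + \lambda^2 a z + \lambda^3 b$; the substitution $z = \lambda w$ rewrites this as $\lambda^3(w^3 + a w + b)$, so if $w_1, w_2, w_3$ are the roots of $z^3 + a z + b$ then the roots of $z^3 + \lambda^2 a z + \lambda^3 b$ are exactly $\lambda w_1, \lambda w_2, \lambda w_3$. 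Hence as $\lambda = e^{2\pi i t}$ traverses $S^1$ once, the three roots rotate rigidly by a full turn about their common center of mass, which is $0$ since the sum of the roots is minus the (vanishing) coefficient of $z^2$.

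Finally I would invoke the standard fact that the braid obtained by rotating a configuration of $n$ points rigidly by $2\pi$ about its center of mass is the full twist, the generator of the center of $B_n$; for $n = 3$ this is $\Delta^2 = (\sigma_1\sigma_2)^3$. Since all generic orbits are freely homotopic and $\Delta^2$ is central, so that its conjugacy class is a single element, this pins down the class of the generic fiber unambiguously as $\Delta^2$. I expect the only genuinely delicate point to be orientations and signs: one must make sure the rotation direction coming from the $S^1$-action matches the right-handedness convention for half twists fixed in \cref{prop: meridians are generators}, so that the generic fiber is $\Delta^{2}$ and not $\Delta^{-2}$. This can be settled by comparing the induced motion of the roots directly with the explicit meridian family $p_t$ used there, or simply by choosing and then fixing one handedness convention throughout, since the two are interchanged by an orientation-reversing symmetry.
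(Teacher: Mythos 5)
Your proposal is correct and takes essentially the same route as the paper: the paper defines the action as simultaneous multiplication of the three roots by a unit complex number (so the full-twist claim is immediate), then computes that this acts on coefficients by $(a,b)\mapsto(\zeta^2 a,\zeta^3 b)$ and finds the same $\cyclic{2}$ and $\cyclic{3}$ stabilizers. You simply run the same computation in the opposite direction (from coefficients back to roots via $z=\lambda w$), and your extra care about faithfulness of generic orbits and orientation conventions is a welcome tightening of details the paper leaves to the reader.
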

\begin{proof}
    An oriented Seifert fiber structure on a 3-manifold is equivalent to an action on the manifold by the circle group with finite point stabilizers. Here we consider $S^1 \subset \mathbb C$ as the unit complex numbers and use the action that multiplies a collection of points in $\mathbb C$ by an element of $S^1$. The loop traced through configuration space by a generic orbit here clearly represents the full twist braid $\Delta^2 \in B_3$. Considering how the action affects the coefficients of a degree 3 polynomial, we have
    \begin{equation}
        \zeta .(a,b) = (\zeta^2 a,\zeta^3 b). \label{eqn:circle action on 3-sphere}
    \end{equation}
    The reader can verify that this action preserves the discriminant condition of \eqref{eq:braid group variety complement}, i.e. we have a well-defined action on the trefoil complement $E_0$. We think of this as a weighted version of the circle action that gives the Hopf fibration.
    
    The only points in the 3-sphere that have non-trivial stabilizer under this action are points of the form $(a,0)$ with stabilizer $\{ -1, 1\} \approx \cyclic{2}$, and points of the form $(0,b)$ with stabilizer $\{1, \exp(2\pi i/3), \exp(4\pi i/3) \} \approx \cyclic{3}$.
\end{proof}

We record the definition of the circle action for future reference.
\begin{definition}\label{def:circle action on E}
    Define the action $S^1 \curvearrowright S^3$ by $\zeta \cdot (a,b) = (\zeta^2 a, \zeta^3 b)$. This also restricts to an action $S^1 \curvearrowright E_0$.
\end{definition}

\begin{proposition}\label{prop: trefoil base orbifold}
    The base orbifold of the Seifert fibration is the $(2,3,\infty)$ triangle orbifold.
\end{proposition}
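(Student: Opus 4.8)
The plan is to compute the orbit space of the weighted circle action of \cref{def:circle action on E} explicitly, recognize it as a $2$-sphere carrying cone points of orders $2$ and $3$, and then observe that the deleted discriminant locus is a single regular orbit, so that the base orbifold of $E_0 \to E_0/S^1$ is that sphere with one additional puncture — which is precisely the orientation-preserving $(2,3,\infty)$ triangle orbifold.

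First I would identify $S^3/S^1$. The $S^1$-invariant functions on $S^3 \subset \mathbb{C}^2$ are generated by $|a|^2$, $|b|^2$, and $a^3\bar b^2$ together with its conjugate, so on $S^3$ (where $|a|^2 + |b|^2 = 1$) the map $q(a,b) = (|a|^2,\, a^3\bar b^2)$ separates orbits. Its image is $\{(u,v) : 0 \le u \le 1,\ |v|^2 = u^3(1-u)^2\}$, which is topologically a $2$-sphere: for $0 < u < 1$ the fibre of $q$ is a single $(2,3)$-curve inside the corresponding Clifford torus $\{|a|^2 = u\}$ — here $\gcd(2,3) = 1$ is exactly what forces this to be one orbit rather than several — while $u = 0$ and $u = 1$ collapse to the two poles. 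Those poles are the images of the orbits $\{(0,b)\}$ and $\{(a,0)\}$, whose stabilizers are $\cyclic{3}$ and $\cyclic{2}$ by \cref{prop: trefoil Seifert structure}; the slice theorem for the $S^1$-action then shows that the quotient orbifold is a sphere with a cone point of order $3$ at one pole and order $2$ at the other. (One could instead quote that this is the weighted projective line $\mathbb{CP}(2,3)$, or the classification of Seifert fibrations of $S^3$.)

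Next I would verify that the discriminant locus $K \subset S^3$, which is the trefoil (\cref{prop: braid group is trefoil complement}), forms a single regular $S^1$-orbit. It is invariant, since $-4(\zeta^2 a)^3 - 27(\zeta^3 b)^2 = \zeta^6(-4a^3 - 27b^2)$, and no point of $K$ has $a = 0$ or $b = 0$ (either forces the other to vanish, violating $|a|^2 + |b|^2 = 1$), so the $S^1$-action on $K$ is free; an orbit is then a connected, hence open and closed, $1$-submanifold of the circle $K$, so it is all of $K$. Therefore $E_0/S^1 = (S^3/S^1) \setminus \{\mathrm{pt}\}$ is a sphere with the two cone points found above and one puncture.

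Finally, a sphere with cone points of orders $p,q,r$ — where $r = \infty$ is read as a puncture — is by definition the orientation-preserving $(p,q,r)$ triangle orbifold, uniformized by $S^2$, $\mathbb{E}^2$, or $\mathbb{H}^2$ according to the sign of $\tfrac1p + \tfrac1q + \tfrac1r - 1$; with $(p,q,r) = (2,3,\infty)$ and $\tfrac12 + \tfrac13 < 1$ this is the hyperbolic $(2,3,\infty)$ orbifold, as claimed. I expect the step requiring the most care is the promotion of the set-theoretic quotient to an orbifold: one must check that the local model at each pole is a disc modulo a \emph{rotation} of the stated order (and not, say, a dihedral action), which is exactly where the freeness of the $S^1$-action away from the two special orbits — equivalently, the fact that we are quotienting by an abelian group with cyclic stabilizers — does the work.
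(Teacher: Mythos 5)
Your proof is correct and follows essentially the same route as the paper: the paper exhibits the quotient map as $(a,b)\mapsto[a^3:b^2]$ onto $\mathbb{CP}^1$ and reads off the cone points of orders $2$ and $3$ and the puncture at the image of the trefoil, while you use the equivalent invariant map $(a,b)\mapsto(|a|^2,a^3\bar b^2)$. The only difference is one of detail: you verify explicitly what the paper merely asserts --- that the fibres of the invariant map are single orbits (via $\gcd(2,3)=1$), that the discriminant locus is a single free orbit, and that the local models at the two exceptional orbits are genuine cyclic cone points --- so your write-up is, if anything, more complete.
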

\begin{proof}
    The ``base orbifold'' of a Seifert fibration is the quotient orbifold of the circle action, with orbifold points corresponding to the exceptional orbits having nontrivial stabilizer. Here we have pairs of complex numbers considered up to a weighted complex multiple. The map $(a,b) \mapsto [a^3:b^2]$ gives a well-defined, surjective, continuous map from $S^3$ to $\mathbb{CP}^1$ whose fibers are exactly the orbits of the $S^1$ action. In other words, the quotient space of the circle action is topologically $\mathbb{CP}^1$. But now, we see $\mathbb{CP}^1$ with one orbifold point of order 2 at the point $[1:0]$, one orbifold point of order 3 at the point $[0:1]$, and a puncture at the point $[27:-4]$ corresponding to the trefoil knot. The situation is depicted in \cref{fig:Seifert_fiber_bundle}.
\end{proof}

Note that the exact sequence arising from this Seifert fiber structure is the short exact sequence \eqref{eq: SES central quotient}.




\begin{figure}
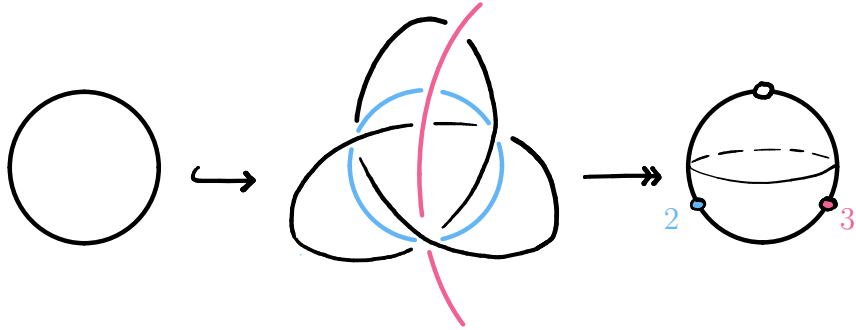

    \centering

    \begin{overpic}[
        page=1,
        viewport={3.2in 6.6in 7.8in 8.4in},
        clip, grid=false
    ]{figures.pdf}
        \put(77,12){\color{SquidBlue}$2$}
        \put(97,12){\color{SquidRed}$3$}

    \end{overpic}
    \caption[Seifert fiber bundle]{The Seifert fiber bundle that we will utilize to connect the group $B_3(d)$ with regular 2-dimensional tilings.}
    \label{fig:Seifert_fiber_bundle}
\end{figure}

\section{Orbispaces} \label{sec: orbispaces}
Our goal is to fill in the trefoil knot with an orbifold locus of order $d$ and thereby obtain the group $B_3(d)$ as an orbifold fundamental group. The Seifert fibration depicted in \cref{fig:Seifert_fiber_bundle} should fill in to give a fiber bundle in the category of orbifolds, and we want to use the associated long exact sequence of homotopy groups to analyze the homomorphism $\Delta^2 \to B_3(d)$. This makes sense intuitively, but we run into problems making it formal (e.g. how does one define ``higher homotopy groups'' of orbifolds in a rigorous but not-too-complicated way?).

To deal with these problems, we utilize an overlooked construction of Andr\'e Henriques that gives a definition of ``orbispace'' that is both intuitive and easy to work with on a formal level \cite{Henriques2001OrbispacesDefinition}. Roughly, Henriques defines an orbispace $M$ to be a continuous map of topological spaces $p:PM \to QM$ which looks something like a fibration with $K(G,1)$ fibers. These are locally modeled on the projection $p:(X \times EG)/G \to X/G$ where $G \curvearrowright X$ is a ``nice'' action of a discrete group and $EG$ is a contractible space with free $G$-action. The base space is the intuitive picture of an almost-free quotient while the ``Borel space'' above, which has no singularities, captures the algebraic topology of the quotient. We encourage the reader to read Henriques's paper, which the authors find is quite approachable and has good examples.

We start by defining all of the spaces we will need. We make extensive use of the infinite dimensional unit sphere in Hilbert space
\begin{equation*}
    S^\infty = \left\{\vec z = (z_1,z_2,\dotsc) \in \ell^2(\mathbb C) \mid \sum |z_i|^2 = 1\right\}.
\end{equation*}
The $\infty$-sphere is contractible and has free, proper actions by the circle group $S^1$ and all finite cyclic groups $\cyclic{n}$ (by viewing these as groups of complex roots of unity) via complex multiplication.

For any group action $G \curvearrowright X$, we will write the elements of the quotient $X/G$ as $G \cdot x$ or $Gx$ for $x \in X$.

\begin{definition}\label{def: orbidisk}
    Write $D^2$ for the unit disk in the complex plane and $D^2_\circ = D^2 \setminus\{0\}$ for the disk punctured at the origin. Define the \emph{order-$d$ orbidisk} to be the space $$\mathbb D^2(d) = (D^2 \times S^\infty)/\cyclic{d}$$ where the action $\cyclic{d} \curvearrowright D^2 \times S^\infty$ is simultaneous multiplication by complex roots of unity.
    
    Define the \emph{punctured orbidisk} to be the subspace $\mathbb D^2_0(d) = (D^2_0 \times S^\infty) / \cyclic d$.

    The orbidisk has a canonical projection $p_D:\mathbb D^2(d) \to D^2/\cyclic{d}$ by just forgetting the $S^\infty$ factor.

    There are free actions by the circle group $S^1 \curvearrowright \mathbb D^2(d)$ and $S^1 \curvearrowright D^2/\cyclic{d}$ by
    \begin{align*}
        \zeta \cdot (\cyclic{d}(p,\vec z)) & = \cyclic{d}(\zeta^{1/d}p,\zeta^{1/d} \vec z) && \text{for } p \in D^2, \vec z \in S^\infty, \zeta \in S^1, \text{and}\\
        \zeta \cdot (\cyclic{d} (p)) & = \cyclic{d}(\zeta^{1/d} p ) && \text{for } p \in D^2, \zeta \in S^1.
    \end{align*}
    Note that this is well-defined since the $d$-th roots of $\zeta$ differ by elements of $\cyclic{d}$. Note also that $p_D$ is now a $S^1$-equivariant map.
\end{definition}

We frequently switch back and forth between a disk $D^2$ and its quotient $D^2/\cyclic{d}$, which are homeomorphic spaces. Away from the origin, their associated bundle spaces are also the same. We record all of this here.

\begin{definition}\label{def:cyclic quotient identifications}
    Define $\psi:D^2 \to D^2/\cyclic{d}$ by
    \begin{equation*}
        \psi(p) = \cyclic{d} \cdot |p|\left( \frac p{|p|} \right)^{1/d}
    \end{equation*}
    with the limiting value $\psi(0) = \cyclic{d} \cdot 0$. This is a homeomorphism with continuous inverse
    \begin{equation*}
        \psi^{-1}(\cyclic{d} \cdot p) = |p| \left( \frac p{|p|} \right)^d
    \end{equation*}
    and the obvious modification for $p=0$.

    Define $\Psi:D^2_0 \times S^\infty \to \mathbb D^2_0(d) $ by
    \begin{equation*}
        \Psi(p,\vec z) = \cyclic{d}\left( |p|\left( \frac p{|p|} \right)^{1/d}, \left( \frac p{|p|} \right)^{1/d} \vec z \right).
    \end{equation*}
    Note that here one needs $p \ne 0$ so that $p/|p|$ (the ``angular part'' of $p$) is defined. This map is a homeomorphism with continuous inverse
    \begin{equation*}
        \Psi^{-1}(\cyclic{d}(p,\vec z)) = \left( \psi^{-1}(\cyclic{d} \cdot p),\left( \frac p{|p|} \right)^{-1} \vec z \right).
    \end{equation*}
\end{definition}


The following lemma is a direct consequence of our various definitions.
\begin{lemma}\label{lemma:disk identifications equivariant}
    The following diagram commutes and is $S^1$-equivariant:

    \begin{center}
    \begin{tikzcd}
        \mathbb D^2(d)  \arrow[d,"p_D"] & D^2_0 \times S^\infty \arrow[l,"\Psi"] \arrow[d]\\
        D^2/\cyclic{d} & D^2_0 \arrow[l,"\psi"]
    \end{tikzcd}
    \end{center}

    Here the map at right is the projection onto the first factor. The $S^1$-actions on the two spaces at left are as in \cref{def: orbidisk} and the actions at right are the multiplication by complex scalars.
\end{lemma}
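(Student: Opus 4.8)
The statement packages two assertions --- commutativity of the square, and $S^1$-equivariance of all four maps --- and the plan is to verify each by simply unwinding \cref{def: orbidisk} and \cref{def:cyclic quotient identifications}; no new idea is required. For commutativity I would chase a point $(p,\vec z) \in D^2_0 \times S^\infty$: applying $\Psi$ and then $p_D$ produces $\Psi(p,\vec z) = \cyclic{d}\bigl(|p|(p/|p|)^{1/d},\,(p/|p|)^{1/d}\vec z\bigr)$ and then, on forgetting the $S^\infty$-coordinate, the point $\cyclic{d}\cdot|p|(p/|p|)^{1/d}$; whereas projecting $(p,\vec z)$ to $p$ and then applying $\psi$ produces $\psi(p) = \cyclic{d}\cdot|p|(p/|p|)^{1/d}$ by the very definition of $\psi$. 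These agree, so the square commutes on the nose.

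For equivariance, $p_D$ is $S^1$-equivariant by construction (as already noted in \cref{def: orbidisk}), and the projection $D^2_0\times S^\infty \to D^2_0$ is equivariant once we take the $S^1$-action on $D^2_0\times S^\infty$ to multiply the disk coordinate and fix the $S^\infty$-coordinate. So the only content is in $\psi$ and $\Psi$, and the single point requiring care is that $d$-th roots are multivalued: for $\zeta \in S^1$ and a unit complex number $w$, the quantities $(\zeta w)^{1/d}$ and $\zeta^{1/d}w^{1/d}$ differ by some $\omega = \omega(\zeta,w) \in \cyclic{d}$ depending on the chosen branches. I would arrange the computations so that such an $\omega$ is produced and then immediately swallowed by the $\cyclic{d}$-quotient. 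Explicitly, $\psi(\zeta p) = \cyclic{d}\cdot|p|\bigl(\zeta(p/|p|)\bigr)^{1/d} = \cyclic{d}\cdot|p|\zeta^{1/d}(p/|p|)^{1/d}\omega = \cyclic{d}\cdot|p|\zeta^{1/d}(p/|p|)^{1/d} = \zeta\cdot\psi(p)$, the last step invoking the $S^1$-action on $D^2/\cyclic{d}$ from \cref{def: orbidisk}; and for $\Psi$ the substitution $p \mapsto \zeta p$ with $\vec z$ unchanged inserts the same $\omega$ into \emph{both} coordinates of $\Psi(\zeta p,\vec z)$, so that $\cyclic{d}(\omega a,\omega b)=\cyclic{d}(a,b)$ strips it away and leaves precisely $\zeta\cdot\Psi(p,\vec z)$ for the action $\zeta\cdot\cyclic{d}(q,\vec w)=\cyclic{d}(\zeta^{1/d}q,\zeta^{1/d}\vec w)$.

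There is no serious obstacle here --- the lemma really is a bookkeeping consequence of the definitions, and continuity of $\psi$, $\Psi$ and their inverses is already in hand from \cref{def:cyclic quotient identifications}. The one place a careless reading could go wrong is the circle action on $D^2_0\times S^\infty$: I would state explicitly that ``multiplication by complex scalars'' there means the scalar acts on the $D^2_0$ factor and trivially on $S^\infty$, and that this is \emph{forced} by demanding that both the projection and $\Psi$ be equivariant --- transporting the action on $\mathbb D^2_0(d)$ across the homeomorphism $\Psi$, the two factors of $\zeta^{1/d}$ appearing in the formula for $\Psi$ recombine into a single $\zeta$ on the disk and cancel on $S^\infty$. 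After that the only thing to watch is never to let an $(\cdot)^{1/d}$ escape a $\cyclic{d}$-quotient, so that branch choices genuinely never matter, and not to drop or duplicate a factor of $\zeta^{1/d}$.
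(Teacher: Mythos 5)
Your proof is correct and is precisely the verification the paper leaves implicit --- the paper offers no argument beyond declaring the lemma ``a direct consequence of our various definitions.'' The two points you single out are exactly the right ones to make explicit: that every branch ambiguity in a $d$-th root is absorbed by the $\cyclic{d}$-quotient before it can matter, and that the $S^1$-action on $D^2_0 \times S^\infty$ must multiply only the disk coordinate (as one confirms by transporting the action on $\mathbb D^2_0(d)$ through $\Psi^{-1}$, where the $\zeta^{1/d}$ factors cancel on the $S^\infty$ coordinate).
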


One consequence of these constructions is that we can view $p_D$ as a map from the orbidisk to the disk $D^2$, rather than to $D^2/\cyclic{d}$. Away from the origin, the map is a trivial $S^\infty$ bundle. Over the origin, the fiber of $p_D$ is a copy of the infinite Lens space $L(\infty,d) \approx K(\cyclic{d},1)$. See \cref{fig:orbidisk} for a conceptual cartoon of this structure.

\begin{figure}
    \centering
    {\includegraphics[width=0.8\linewidth,page=1,
        viewport={0.4in 4in 6in 6.2in},
        clip]{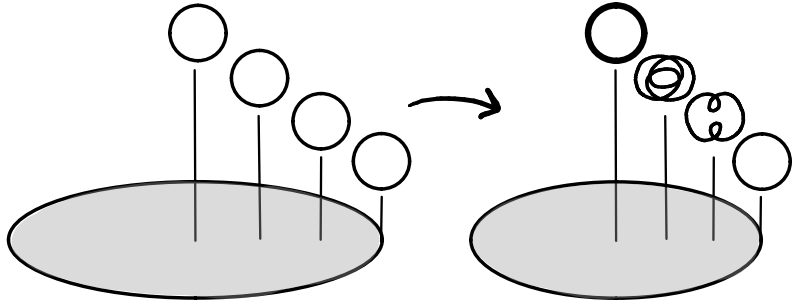}}
    \caption[Orbidisk]{The orbispace picture for $\mathbb D^2(d)$. At left is the product $D^2 \times S^\infty$, with $S^\infty$ represented by a circle. At right is the quotient by the $\cyclic{d}$ action: a disk with $S^\infty$ fibers lying over every non-origin point, mapping $d$-to-$1$ to the Lens space fiber over the origin.}
    \label{fig:orbidisk}
\end{figure}




Now we construct the orbispace that models the truncated braid group. First we need a way to glue our spaces to $E$.
\begin{definition}
    Let $\nu:D^2 \hookrightarrow E$ be a (smooth) embedding of the disk into $E \approx S^3$ which is transverse to the orbits of the $S^1$-action of \cref{def:circle action on E}, intersects each such orbit at most once, and such that $\nu(0)$ satisfies $-4a^3-27b^2=0$.\footnote{We think of $\nu$ as a normal disk to the trefoil knot in the 3-sphere.}

    Furthermore, define the embedding $N:D^2_0 \times S^\infty \times S^1 \to E_0 \times S^\infty$ by
    \begin{equation*}
        N(p,\vec z, \zeta) = (\zeta \cdot \nu(\zeta^{-1}p),\vec z)
    \end{equation*}
\end{definition}

Roughly, the map $N$ projects a point in the punctured solid torus $D^2_0 \times S^1$ to the disk $D^2_0$, maps that point into $E_0$ by $\nu$, and then pushes the point along the $S^1 \curvearrowright E_0$ action. The $S^\infty$ factor is unaffected.

The following again is a quick consequence of the definitions.
\begin{lemma}\label{lemma:disk embeddings equivariant}
    The following diagram commutes and is $S^1$-equivariant:

    \begin{center}
    \begin{tikzcd}
        D^2_0 \times S^\infty \times S^1 \arrow[r,"N"] \arrow[d] & E_0 \times S^\infty \arrow[d]\\
        D^2_0 \times S^1
        \arrow[r,"{(p,\zeta) \mapsto \zeta \cdot \nu(p)}"]
        & E_0
    \end{tikzcd}
    \end{center}

    Here the vertical maps are the obvious projections. The $S^1$-actions on the two spaces at right are as in \cref{def:circle action on E} and the actions at left are the multiplication by complex scalars.
\end{lemma}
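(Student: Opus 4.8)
The plan is to establish the two assertions of the lemma --- that the square commutes and that it is $S^1$-equivariant --- purely by unwinding the definitions, in the same spirit as the parallel \cref{lemma:disk identifications equivariant}. No genuinely new idea is needed; the content is bookkeeping of the $\zeta$-twist that was built into the definition of $N$, and I would set that bookkeeping up carefully before making any substitution.

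First I would write every arrow in the square out explicitly. The two vertical maps forget the $S^\infty$-coordinate, and the horizontal maps are $N(p,\vec z,\zeta)=(\zeta\cdot\nu(\zeta^{-1}p),\vec z)$ on top and $(p,\zeta)\mapsto\zeta\cdot\nu(p)$ on the bottom. Commutativity then amounts to checking that post-composing $N$ with the projection $E_0\times S^\infty\to E_0$ agrees with pre-composing the bottom embedding with the left-hand projection; both composites send $(p,\vec z,\zeta)$ to the orbit point $\zeta\cdot\nu(\zeta^{-1}p)\in E_0$, so this is a one-line diagram chase once one fixes the normalization of the ``obvious'' left-hand projection, i.e.\ decides on which side of $p$ the $\zeta^{-1}$ is carried --- precisely the kind of convention that \cref{def:cyclic quotient identifications} and \cref{lemma:disk identifications equivariant} were set up to track. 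I would pin this down at the outset, after which the verification is immediate.

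For equivariance I would first record the circle actions: on the left-hand column $\eta\in S^1$ acts by complex multiplication on the disk- and circle-coordinates and trivially on $S^\infty$, while on the right-hand column it acts through \cref{def:circle action on E} on $E_0$ and trivially on $S^\infty$. With these conventions the two vertical projections are equivariant by inspection, and equivariance of the bottom embedding is verified by the same kind of substitution as commutativity. The crux is equivariance of $N$, and here the whole point is that the exponent $-1$ inside $N$ was chosen exactly so that the action cancels: for $\eta\in S^1$ the arithmetic identity
\[
  (\eta\zeta)^{-1}(\eta p)=\zeta^{-1}p
\]
holds in $\mathbb{C}$, so that $N(\eta\cdot(p,\vec z,\zeta))=((\eta\zeta)\cdot\nu(\zeta^{-1}p),\vec z)=\eta\cdot N(p,\vec z,\zeta)$, using that the $S^1$-action on $E_0\times S^\infty$ restricts to \cref{def:circle action on E} on the $E_0$-factor. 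I expect the only real obstacle to be this coordinate hygiene --- choosing the action and projection conventions on the three product spaces consistently enough that all four maps in the square are simultaneously equivariant --- after which each individual verification collapses to a single substitution.
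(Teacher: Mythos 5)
Your approach is the same as the paper's: the paper offers no argument beyond calling the lemma ``a quick consequence of the definitions,'' and your direct unwinding, with the key cancellation $(\eta\zeta)^{-1}(\eta p)=\zeta^{-1}p$ establishing equivariance of $N$ under the diagonal action on the $D^2_0$ and $S^1$ factors, is exactly the content. However, the ``coordinate hygiene'' you defer is not optional, and you should not leave it at ``I would pin this down'': with the literal reading (obvious forgetful projection on the left, bottom arrow $(p,\zeta)\mapsto\zeta\cdot\nu(p)$), the two composites send $(p,\vec z,\zeta)$ to $\zeta\cdot\nu(\zeta^{-1}p)$ and to $\zeta\cdot\nu(p)$ respectively, which are not equal, and moreover the bottom arrow as labeled is \emph{not} equivariant for the diagonal action (while it \emph{is} equivariant if $S^1$ acts only on the $S^1$ factor, in which case $N$ fails instead). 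So your claim that ``both composites send $(p,\vec z,\zeta)$ to $\zeta\cdot\nu(\zeta^{-1}p)$'' and that the bottom map's equivariance follows ``by the same kind of substitution'' are only true after committing to the fix: read the bottom arrow as $(p,\zeta)\mapsto\zeta\cdot\nu(\zeta^{-1}p)$ (equivalently, absorb the twist $(p,\zeta)\mapsto(\zeta^{-1}p,\zeta)$ into the left vertical map). That this is the intended reading is confirmed by the paper's own later use of the gluing in \cref{lemma: E is the 3-sphere}, where the map $(\cyclic{d}\cdot p,\zeta)\mapsto\zeta\cdot\nu(\zeta^{-1}\psi^{-1}(\cyclic{d}\cdot p))$ carries the $\zeta^{-1}$. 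With that single correction stated explicitly, every verification in your proposal goes through as written.
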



If we cross the diagram of \cref{lemma:disk identifications equivariant} with $S^1$ and adjust the group actions accordingly, then the diagrams past together into one large $S^1$-equivariant diagram. The consequence of all of this is that we can now glue all of our spaces together into one larger space with a well-behaved $S^1$-action.

\begin{definition}\label{def: orbispace for braid group}
    Define the \textit{total space}
    \begin{equation*}
        \mathbb E = (\mathbb D^2(d) \times S^1) \bigcup_{D^2_0 \times S^\infty \times S^1} (E_0  \times S^\infty)
    \end{equation*}
    to be the pushout of the top rows of the diagrams of \cref{lemma:disk identifications equivariant,lemma:disk embeddings equivariant}.

    Similarly, define
    \begin{equation*}
        E = (D^2/\cyclic{d} \times S^1) \bigcup_{D^2_0 \times S^1} (E_0)
    \end{equation*}
    to be the pushout of the bottom rows. Let $p_E:\mathbb E \to E$ be the induced, $S^1$-equivariant continuous map.
\end{definition}


We now have the space that will serve as our model for the truncated braid group (see \cref{prop: truncated braid group is pi1}). The $S^1$ action it carries mirrors the Seifert fiber structure of \cref{prop: trefoil Seifert structure}, so we take the quotient space as in \cref{prop: trefoil base orbifold}.

\begin{definition}\label{def: orbispace base space}
    Define the \textit{base space} $\mathbb B$ as the quotient $\mathbb E / S^1$. Define $B = E/S^1$. Finally, let $p_B:\mathbb B \to B$ be the induced map.
\end{definition}

To make the connection with the constructions in \cref{sec: braids and trefoil} even clearer, we make note of the following identifications.
\begin{lemma}\label{lemma: E is the 3-sphere}
    There is an $S^1$-equivariant $E \xrightarrow{\approx} S^3$ and a homeomorphism $B \xrightarrow{\approx} \mathbb{CP}^1$ that make the following diagram commute

    \begin{center}
        \begin{tikzcd}
            E \arrow[rr] \arrow[d, "\approx"]                 & & B \arrow[d, "\approx"] \\
            S^3 \arrow[rr, "{(a,b) \mapsto [a^3:b^2]}"] & & \mathbb{CP}^1   
        \end{tikzcd}
    \end{center}
\end{lemma}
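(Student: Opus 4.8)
The plan is to construct an explicit $S^1$-equivariant homeomorphism $\Theta\colon E\xrightarrow{\approx}S^3$ straight from the pushout description of $E$ in \cref{def: orbispace for braid group}, and then read off the homeomorphism $B\xrightarrow{\approx}\mathbb{CP}^1$ by passing to $S^1$-quotients. The geometric content is that $E$ is the trefoil complement $E_0 = S^3\setminus K$ — with $K = \{-4a^3-27b^2=0\}\cap S^3$ the trefoil and the action of \cref{def:circle action on E} — into which one glues a solid torus $D^2/\cyclic{d}\times S^1\cong D^2\times S^1$ along the deleted tubular neighborhood of $K$, with the gluing dictated by the normal-disk embedding $\nu$. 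Since $\nu(\partial D^2)$ bounds the disk $\nu(D^2)$, which meets $K$ transversally once, $\nu(\partial D^2)$ is a meridian of $K$, so this is the trivial Dehn filling and returns $S^3$. (One could alternatively invoke the classification of Seifert fibered spaces, but the direct construction also supplies the explicit maps needed for the commutative square.)

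Concretely, I would define $\Theta$ to be the inclusion $E_0\hookrightarrow S^3$ on the summand $E_0$, and on the summand $D^2/\cyclic{d}\times S^1$ to be $\Theta(\cyclic{d} p,\zeta) = \zeta\cdot\nu\big(\psi^{-1}(\cyclic{d} p)\big)$, using the homeomorphism $\psi^{-1}\colon D^2/\cyclic{d}\xrightarrow{\approx}D^2$ of \cref{def:cyclic quotient identifications} and the action of \cref{def:circle action on E}. On the overlap $D^2_0\times S^1$ both recipes return $\zeta\cdot\nu(p)$, so by the universal property of the pushout $\Theta$ is well defined and continuous. Equivariance is a one-line check on each summand, using that $S^1$ acts on the $D^2/\cyclic{d}\times S^1$ summand — compatibly with both gluing maps of \cref{lemma:disk identifications equivariant,lemma:disk embeddings equivariant} — by rotating the $S^1$-factor, so that $\Theta(\eta\cdot(\cyclic{d} p,\zeta)) = \eta\zeta\cdot\nu(\psi^{-1}(\cyclic{d} p)) = \eta\cdot\Theta(\cyclic{d} p,\zeta)$.

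The technical heart is showing $\Theta$ is a homeomorphism, which I would reduce to a sub-lemma: the map $\Phi\colon D^2\times S^1\to S^3$, $\Phi(p,\zeta) = \zeta\cdot\nu(p)$, is a closed embedding onto a tubular neighborhood $\mathcal N$ of $K$, with $\Phi(\{0\}\times S^1) = K$ and $\Phi(D^2_0\times S^1) = \mathcal N\setminus K$. Injectivity of $\Phi$ is a case analysis: the hypotheses on $\nu$ force $\nu(D^2_0)\cap K = \emptyset$ (otherwise $\nu$ meets the orbit $K$ twice); $K$ is a single free orbit (both coordinates are nonzero on $K$, so the stabilizer is trivial, and $K$ is disjoint from the two exceptional orbits of \cref{prop: trefoil Seifert structure}, which are the coordinate circles); and — the one point I would flag — one needs $\nu(D^2)$ to meet only free orbits, which I would secure by taking $\nu$ with image inside a small tubular neighborhood of $K$ (a harmless normalization of the choice, since if $\nu$ hit an exceptional orbit then $\Phi$ would genuinely fail to be injective nearby). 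Granting injectivity, $\Phi$ is a continuous injection from a compact space into a Hausdorff space, hence an embedding; transversality of $\nu$ to the locally free orbits makes $\Phi$ a local homeomorphism in the interior, so $\mathcal N$ is a closed neighborhood of $K$. With the sub-lemma, the triple $(\psi^{-1}\times\mathrm{id},\ \Phi,\ \mathrm{id}_{E_0})$ identifies the pushout defining $E$ with the pushout $(S^3\setminus K)\cup_{\mathcal N\setminus K}\mathcal N$; a short point-set check (a subset of $S^3$ is closed iff its traces on the closed set $\mathcal N$ and on the open set $S^3\setminus K$ are closed, using that $\mathcal N$ is a neighborhood of $K$) shows this pushout is homeomorphic to $S^3$, via $\Theta$.

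Finally, since $\Theta$ is $S^1$-equivariant it descends to a homeomorphism $\bar\Theta\colon B = E/S^1\to S^3/S^1$, and composing with the homeomorphism $S^3/S^1\xrightarrow{\approx}\mathbb{CP}^1$ induced by $(a,b)\mapsto[a^3:b^2]$ (established in the proof of \cref{prop: trefoil base orbifold}, where the fibers of this map are exactly the $S^1$-orbits) yields the required $B\xrightarrow{\approx}\mathbb{CP}^1$. The square in the statement commutes because its right vertical arrow is, by construction, the map induced on quotients by $\Theta$ followed by the identification $S^3/S^1\cong\mathbb{CP}^1$. I expect the main obstacle to be the sub-lemma on $\Phi$: verifying that the glued-in solid torus really does reconstitute $S^3$ needs both the point-set care about the pushout topology and the easily missed normalization that $\nu$ avoids the exceptional orbits; the rest is formal.
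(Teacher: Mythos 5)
Your proposal is correct and follows essentially the same route as the paper: the paper also defines the map to $S^3$ by the inclusion on $E_0$ and by pushing $\nu$ around the $S^1$-orbits on the solid-torus summand, invokes the universal property of the pushout (leaving the verification to the reader that you carry out in your tubular-neighborhood sub-lemma), and then descends to quotients via the identification $S^3/S^1 \approx \mathbb{CP}^1$ from \cref{prop: trefoil base orbifold}. The only cosmetic difference is that the paper writes the solid-torus map as $(\cyclic{d}\cdot p,\zeta)\mapsto \zeta\cdot\nu(\zeta^{-1}\psi^{-1}(\cyclic{d}\cdot p))$, matching its definition of $N$, whereas yours matches the bottom row of \cref{lemma:disk embeddings equivariant} as printed; your version is the one compatible with the pushout defining $E$.
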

\begin{proof}
    First, we have the obvious inclusion $E_0 \subset S^3$. We also take a map $(D^2/\cyclic{d}) \times S^1 \to S^3$ by $(\cyclic{d} \cdot p, \zeta) \mapsto \zeta \cdot \nu(\zeta^{-1}\psi^{-1}(\cyclic{d} \cdot p))$. This gives an $S^1$-equivariant commutative diagram
    \begin{center}
        \begin{tikzcd}
            D^2/\cyclic{d} \times S^1 \arrow[rd, bend right] & D^2_0 \times S^1 \arrow[r] \arrow[l] &  E_0 \arrow[ld, bend left] \\[-5pt]
                                   & S^3                                    &                                                
        \end{tikzcd}
    \end{center}
    The reader can verify that $S^3$ actually satisfies the universal property for $E$, giving a canonical $S^1$-equivariant homeomorphism $E \approx S^3$. So then the quotient space $E/S^1$ is canonically homeomorphic to the quotient of $S^3$ by the $S^1$-action of \eqref{eqn:circle action on 3-sphere}. But this quotient is $\mathbb{CP}^1$ as we saw in the proof of \cref{prop: trefoil base orbifold}.
\end{proof}

Later we will seek to understand the universal cover of $\mathbb B$. For this, it will be useful to understand the structure of the map $p_B$.

\begin{proposition}\label{prop: structure of the base space projection}
    The map $p_B:\mathbb B \to B \approx \mathbb{CP}^1$ is isomorphic to 
    \begin{itemize}
        \item the projection $\mathbb{D}^2(2) \to D^2/\cyclic 2$ over a neighborhood of $[1:0]$, 
        \item the projection $\mathbb{D}^2(3) \to D^2/\cyclic 3$ over a neighborhood of $[0:1]$,
        \item the projection $\mathbb{D}^2(d) \to D^2/\cyclic d$ over a neighborhood of $[27:-4]$, or
        \item an $S^\infty$ bundle away from these three points.
    \end{itemize}
\end{proposition}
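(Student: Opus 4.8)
The plan is to compute $p_B$ chart-by-chart over $B$. By \cref{lemma: E is the 3-sphere} we identify $E\approx S^3$ and $B\approx\mathbb{CP}^1$ so that the order-$2$ exceptional $S^1$-orbit sits over $[1:0]$, the order-$3$ orbit over $[0:1]$, and the trefoil over $[27:-4]$. The pushout description in \cref{def: orbispace for braid group} shows that $p_E\colon\mathbb{E}\to E$ is, over $E_0\subset E$, the product projection $E_0\times S^\infty\to E_0$ --- with $S^1$ acting diagonally, by the weighted action of \cref{def:circle action on E} on $E_0$ and by scalar multiplication on $S^\infty$ --- and is $p_D\times\mathrm{id}\colon\mathbb{D}^2(d)\times S^1\to D^2/\cyclic d\times S^1$ over the glued solid torus surrounding the trefoil. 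A first key point is that the resulting $S^1$-action on $\mathbb{E}$ is \emph{free}: on the $E_0\times S^\infty$ chart because scalar multiplication on $S^\infty$ is free whatever the $E_0$-stabilizer is, and on the $\mathbb{D}^2(d)\times S^1$ chart because $S^1$ acts freely on the $S^1$-factor. Consequently $\mathbb{B}=\mathbb{E}/S^1\to B=E/S^1$ is well behaved, and we may analyze it over each of the four regions.

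Away from the three distinguished points: writing $E_0^{\mathrm{reg}}\subset E_0$ for the complement of the two exceptional orbits, $S^1$ acts freely on $E_0^{\mathrm{reg}}$, so $E_0^{\mathrm{reg}}\to E_0^{\mathrm{reg}}/S^1$ is a principal $S^1$-bundle with base $\mathbb{CP}^1\setminus\{[1:0],[0:1],[27:-4]\}$. Over this locus $\mathbb{E}$ is $E_0^{\mathrm{reg}}\times S^\infty$ with the diagonal action, so $p_B$ restricts to the associated bundle $E_0^{\mathrm{reg}}\times_{S^1}S^\infty\to E_0^{\mathrm{reg}}/S^1$, a fiber bundle with fiber $S^\infty$. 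Near $[27:-4]$: a small enough neighborhood of this point pulls back in $E$ into the chart $D^2/\cyclic d\times S^1$ and in $\mathbb{E}$ into $\mathbb{D}^2(d)\times S^1$; quotienting by the free $S^1$-action (which has the global slice where the $S^1$-coordinate equals $1$) gives $(D^2/\cyclic d\times S^1)/S^1\approx D^2/\cyclic d$ and $(\mathbb{D}^2(d)\times S^1)/S^1\approx\mathbb{D}^2(d)$, under which $p_D\times\mathrm{id}$ descends to $p_D$. Hence $p_B$ is isomorphic to $p_D\colon\mathbb{D}^2(d)\to D^2/\cyclic d$ there.

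The substantive case is near $[1:0]$ and $[0:1]$. By the equivariant tubular neighborhood theorem, a neighborhood of the order-$k$ exceptional orbit in $E_0$ ($k=2$ over $[1:0]$, $k=3$ over $[0:1]$) is $S^1$-equivariantly diffeomorphic to the twisted product $S^1\times_{\cyclic k}D^2$, where $\cyclic k\hookrightarrow S^1$ is the point stabilizer acting linearly on the normal slice $D^2\subset\mathbb{C}$. Differentiating the weighted action $\zeta\cdot(a,b)=(\zeta^2a,\zeta^3b)$ shows that at $(1,0)$ the slice is the $b$-line with $\cyclic 2=\{\pm1\}$ acting by $b\mapsto-b$, and at $(0,1)$ the slice is the $a$-line with a chosen generator $\omega$ of $\cyclic 3$ acting by $a\mapsto\omega^2a$; in each case this is, after relabeling the generator, exactly the $\cyclic k$-action defining $\mathbb{D}^2(k)$. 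Now cross with $S^\infty$ and pass to the $S^1$-quotient: writing $S^1\times_{\cyclic k}D^2=(S^1\times D^2)/\cyclic k$,
\begin{equation*}
    \bigl((S^1\times_{\cyclic k}D^2)\times S^\infty\bigr)\big/S^1 \;=\; (S^1\times D^2\times S^\infty)\big/(\cyclic k\times S^1),
\end{equation*}
and eliminating the free $S^1$-factor first (via the slice $\{1\}\times D^2\times S^\infty$) identifies the right-hand side with $(D^2\times S^\infty)/\cyclic k=\mathbb{D}^2(k)$, where the residual $\cyclic k$ now acts diagonally. Chasing the projection $(S^1\times_{\cyclic k}D^2)\times S^\infty\to S^1\times_{\cyclic k}D^2$ through these identifications yields exactly $p_D\colon\mathbb{D}^2(k)\to D^2/\cyclic k$. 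This settles all four cases.

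The hard part will be this last step, in three sub-parts: (a) producing the twisted-product local model equivariantly, (b) correctly reading off the slice representation of $\cyclic k$ from the weighted circle action, and --- the crux --- (c) carrying that representation faithfully through the Borel construction so as to recognize the $S^1$-quotient as \emph{exactly} the orbidisk $\mathbb{D}^2(k)$ with its canonical projection $p_D$, rather than merely as an orbispace supported on a disk. The bookkeeping of which copy of $\cyclic k$ acts on which factor, together with the $\zeta^{1/d}$-type twists built into the various $S^1$-actions of \cref{def: orbidisk,def: orbispace for braid group}, is where the care lies; everything else is a routine unwinding of the pushout.
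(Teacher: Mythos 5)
Your proposal is correct and follows essentially the same route as the paper: both arguments are slice arguments that identify $p_B$ over a neighborhood of $x$ with $(\tilde D\times S^\infty)/\stab(\tilde x)\to \tilde D/\stab(\tilde x)$ for a stabilizer-invariant transverse disc $\tilde D$, and both handle $[27:-4]$ separately via the glued chart $\mathbb D^2(d)\times S^1$ with its obvious global $S^1$-slice. You are somewhat more explicit than the paper (invoking the equivariant tubular neighborhood theorem, computing the slice representations of $\cyclic 2$ and $\cyclic 3$ from the weighted action, and checking that the residual $\cyclic k$-action after eliminating the $S^1$-factor is the diagonal one defining $\mathbb D^2(k)$), but this is added detail, not a different method.
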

\begin{proof}
    For a point $x \in \mathbb{CP}^1$, let $\tilde x$ be a single point in the preimage in $E \approx S^3$. Let $\tilde D \subset S^3$ be a disc containing $\tilde x$ in its interior that is transverse to the orbits of the $S^1 \curvearrowright S^3$ action, is invariant under the stabilizer subroup $\stab(\tilde x) \leq S^1$, and intersects at most one of the preimages of the three points under consideration here or intersects none of those preimages if $x$ is none of the three points. Then $D \coloneqq S^1 \cdot \tilde D$ is an open neighborhood of $x \in \mathbb{CP}^1$ and $p_B^{-1}(D)$ is the projection to $\mathbb B$ of $p_E^{-1}(\tilde D)$. Here is the relevant diagram for convenience.

    \begin{center}
    \begin{tikzcd}
        \mathbb E \arrow[d,"p_E"] \arrow[rrr,"/S^1"] &[-25pt]&&[-25pt] \mathbb B \arrow[d,"p_B"] \\
        E \arrow[r,phantom,"\approx"] & S^3 \arrow[r,"/S^1"]                 & \mathbb{CP}^1 \arrow[r,phantom,"\approx"] & B
    \end{tikzcd}
    \end{center}

    Whenever $x \ne [27:-4]$, we can take $\tilde D$ to lie in $E_0$ so that $p_E^{-1}(\tilde D) = \tilde D \times S^\infty$. Then the map $p_B:p_B^{-1}(D) \to D$ is just the map $(\tilde D \times S^\infty)/\stab(\tilde x) \to \tilde D/\stab(\tilde x)$. 
    \begin{itemize}
        \item If $x=[1:0]$, then $\stab(\tilde x) = \cyclic 2$. This gives $\mathbb D^2(2) \to D^2/\cyclic 2$.
        \item If $x=[0:1]$, then $\stab(\tilde x) = \cyclic 3$. This gives $\mathbb D^2(3) \to D^2/\cyclic 3$.
        \item If $x \ne [27:-4]$ and is also neither of these two points, then $\stab(\tilde x) = \{1\}$. This gives the trivial product bundle $\tilde D \times S^\infty \to \tilde D$.
    \end{itemize}

    Finally if $x=[27:-4]$, then $\stab(\tilde x) = \{1\}$. We can take $\tilde D$ to be the disk $\nu(D^2)$ as used in the construction of $\mathbb E$ above. Now $p_E^{-1}(\tilde D) = \mathbb D^2(d) \times S^1$, whose image in $\mathbb B$ is $\mathbb D^2(d)$. The neighborhood $D = \tilde D \subset \mathbb{CP}^1$ is homeomorphic to $D^2/\cyclic d$. We see the desired structure.
\end{proof}

If our model of orbifold was that of an atlas of charts with group actions, then the map $E \to B$ would not be a fiber bundle in the usual sense. The orbifold structure allows for fibers to degenerate over singular points or for some fibers to be made entirely of singular points (both of which happen in our case). With the orbispace viewpoint presented here, we have an honest fiber bundle to which we will apply the long exact sequence of homotopy groups.

\begin{figure}
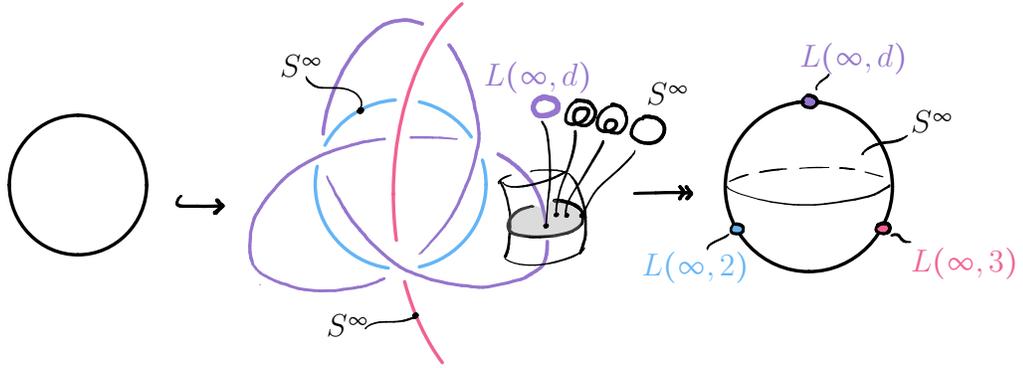

    \centering

    \begin{overpic}[
            width=0.9\linewidth, page=1,
            viewport={0.8in 0.8in 7.2in 3.6in},
            clip, grid=false
        ]{figures.pdf}
            \put(30,33){$S^\infty$}
            \put(35,5){$S^\infty$}
            \put(52,32){\color{SquidPurple} $L(\infty,d)$}
            \put(69.5,30){$S^\infty$}
            \put(69,11.5){\color{SquidBlue} $L(\infty,2)$}
            \put(98,12){\color{SquidRed} $L(\infty,3)$}
            \put(86,34){\color{SquidPurple}$L(\infty,d)$}
            \put(98,27){$S^\infty$}
        
    \end{overpic}
    \caption[The orbispace fiber bundle]{The orbispace version of the ``orbifold fiber bundle'' of \cref{fig:Seifert_fiber_bundle}. We draw the orbispaces as ``fiber bundles'' over manifolds with fibers over various points labeled.}
    \label{fig:orbispace_bundle}
\end{figure}

\section{Analysis of the long exact sequence} \label{sec: long exact sequence analysis}
The starting point of our argument is the following lemma.
\begin{lemma}\label{lemma: we have a fibration}
    The projection map $\mathbb E \to \mathbb B$ is a fiber bundle with fiber $S^1$.
\end{lemma}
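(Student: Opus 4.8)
The plan is to identify $\mathbb{E}\to\mathbb{B}$ as the orbit map of a \emph{free} $S^1$-action and to read off local trivializations directly from the two charts that make up $\mathbb{E}$. By \cref{def: orbispace base space}, $\mathbb{B}=\mathbb{E}/S^1$ and the map in question is the quotient map. By \cref{def: orbispace for braid group} and the equivariant gluing data of \cref{lemma:disk identifications equivariant,lemma:disk embeddings equivariant}, $\mathbb{E}$ is the pushout, formed in $S^1$-spaces, of
\[
    \mathbb{D}^2(d)\times S^1 \xleftarrow{\ \Psi\times\mathrm{id}\ } D^2_0\times S^\infty\times S^1 \xrightarrow{\ N\ } E_0\times S^\infty .
\]
Both $\Psi\times\mathrm{id}$ and $N$ are open embeddings: $\mathbb{D}^2_0(d)$ is open in $\mathbb{D}^2(d)$ (it is the complement of the Lens-space fibre over the origin; use the explicit inverse of $\Psi$ in \cref{def:cyclic quotient identifications}), and $N$ identifies its domain with the complement of the core in an $S^1$-invariant tubular neighbourhood of the trefoil inside $E_0$. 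Hence $\mathbb{D}^2(d)\times S^1$ and $E_0\times S^\infty$ appear in $\mathbb{E}$ as $S^1$-invariant open subsets whose union is $\mathbb{E}$; dividing by $S^1$ gives an open cover $\mathbb{B}=\mathcal{U}_{\mathrm{tref}}\cup\mathcal{U}_{\mathrm{reg}}$ with $\mathcal{U}_{\mathrm{tref}}=(\mathbb{D}^2(d)\times S^1)/S^1$ and $\mathcal{U}_{\mathrm{reg}}=(E_0\times S^\infty)/S^1$.

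First I would check that the $S^1$-action on $\mathbb{E}$ is free. On $E_0\times S^\infty$ the action is diagonal: as in \cref{def:circle action on E} on $E_0$, and by complex multiplication on $S^\infty$. Since $S^1$ acts freely on $S^\infty$, this diagonal action is free --- this is exactly why the factor $S^\infty$ was introduced, since the bare action $S^1\curvearrowright E_0$ of \cref{prop: trefoil Seifert structure} has exceptional $\cyclic{2}$- and $\cyclic{3}$-orbits, and adjoining the free $S^1$-space $S^\infty$ destroys those stabilisers. On $\mathbb{D}^2(d)\times S^1$ the action is diagonal with the translation action on the $S^1$ factor, so it too is free (and indeed $S^1\curvearrowright\mathbb{D}^2(d)$ is already free by \cref{def: orbidisk}). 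As $\mathbb{E}$ is the union of these two invariant open pieces, the $S^1$-action on $\mathbb{E}$ is free, so every fibre of $\mathbb{E}\to\mathbb{B}$ is a single orbit, homeomorphic to $S^1$.

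Next I would trivialise over each chart. Over $\mathcal{U}_{\mathrm{tref}}$ the $S^1$-equivariant homeomorphism $(x,\zeta)\mapsto(\zeta^{-1}\cdot x,\zeta)$ carries the diagonal action on $\mathbb{D}^2(d)\times S^1$ to the action by translation on the second factor alone; hence $\mathcal{U}_{\mathrm{tref}}\cong\mathbb{D}^2(d)$ and $\mathbb{E}$ over $\mathcal{U}_{\mathrm{tref}}$ is the trivial bundle $\mathbb{D}^2(d)\times S^1$. Over $\mathcal{U}_{\mathrm{reg}}$, near any point choose a local section $s\colon U\to S^\infty$ of the locally trivial principal bundle $S^\infty\to S^\infty/S^1$; then $(e,\lambda\cdot s(u))\mapsto(\lambda^{-1}\cdot e,u,\lambda)$ is an $S^1$-equivariant homeomorphism $E_0\times(S^1\cdot s(U))\xrightarrow{\approx}E_0\times U\times S^1$ intertwining the diagonal action with translation on $S^1$, so the bundle is trivial with fibre $S^1$ over the image of $S^1\cdot s(U)$ in $\mathcal{U}_{\mathrm{reg}}$, and such images cover $\mathcal{U}_{\mathrm{reg}}$. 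Therefore $\mathbb{E}\to\mathbb{B}$ is locally trivial with fibre $S^1$ over a cover of $\mathbb{B}$, i.e.\ a fibre bundle (in fact a principal $S^1$-bundle) with fibre $S^1$. One could instead invoke Gleason's theorem: a free action of the compact Lie group $S^1$ on a sufficiently nice (e.g.\ metrisable) space makes the orbit map a principal $S^1$-bundle, and $\mathbb{E}$ is metrisable, being glued from metrisable charts along an open set.

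The step I expect to be the main obstacle is the bookkeeping in the freeness check: one must keep straight the weighted $\zeta^{1/d}$ normalisation on the orbidisk factor (\cref{def: orbidisk}), the plain translation on the auxiliary $S^1$ factor, and the diagonal action on $E_0\times S^\infty$, and confirm that the equivariant gluing lets no point of $\mathbb{E}$ acquire a nontrivial stabiliser. The essential conceptual input is that the bundle-space factors $\mathbb{D}^2(d)$ and $E_0\times S^\infty$ carry genuinely free $S^1$-actions --- this is the payoff of the Borel/orbispace construction of \cref{sec: orbispaces}, and it is precisely what promotes the Seifert fibration of \cref{prop: trefoil Seifert structure}, with its exceptional fibres, to an honest $S^1$-bundle. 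A secondary point, that the two charts are glued along an open set, follows from the explicit inverse of $\Psi$ in \cref{def:cyclic quotient identifications} and the construction of $N$.
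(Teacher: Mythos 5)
Your proof is correct, but it is considerably more hands-on than the paper's, which disposes of the lemma in one sentence by citing Gleason's theorem: the quotient of a completely regular space by a free action of a compact Lie group is a fiber bundle (indeed a principal bundle). That citation is exactly the alternative you offer in your closing remark, so the two arguments share the same essential input --- freeness of the $S^1$-action on $\mathbb E$ --- which the paper asserts without comment and which you verify chart by chart. What your longer route buys is worth noting: it makes explicit \emph{why} the action is free (the $S^\infty$ factor absorbs the $\cyclic{2}$- and $\cyclic{3}$-stabilizers of the Seifert action on $E_0$, and the auxiliary $S^1$ factor handles the orbidisk chart), it exhibits concrete local trivializations rather than appealing to a black box, and it identifies the bundle as principal. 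What the Gleason route buys is brevity and robustness: one does not need to chase the equivariant gluing data through the pushout, only to know that the action is free and the space is completely regular. One caution on your version: the paper is deliberately loose about the precise $S^1$-action on the glued space (the phrase ``adjust the group actions accordingly'' hides some bookkeeping with the $\zeta^{1/d}$ weights), so your freeness check on the $\mathbb D^2(d)\times S^1$ chart should be phrased against whatever action actually makes both gluing maps equivariant; you correctly flag this as the delicate point, and it does work out, but it is the one place where your argument leans on details the paper itself elides.
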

\begin{proof}
    This is simply because the projection map is the quotient by the free action of a compact Lie group on a completely regular topological space. See \cite[Theorem 3.6]{Gleason1950SpacesTransformations} for a primary reference.
\end{proof}

This lemma along with our understanding of the projection maps from \cref{prop: structure of the base space projection} lets us draw the fiber bundle as in \cref{fig:orbispace_bundle}.

A standard result in algebraic topology now gives us an associated long exact sequence (see e.g. \cite[Theorem 4.41]{Hatcher2002AlgebraicTopology}).

\begin{corollary}\label{cor: long exact sequence}
    There is a long exact sequence of homotopy groups
    \begin{equation}\label{eqn: long exact sequence}
        \dotsb \to \pi_2(\mathbb B) \xrightarrow{\partial} \pi_1(S^1) \to \pi_1(\mathbb E) \to \pi_1(\mathbb B) \to \pi_0(S^1) \to \dotsb
    \end{equation}
\end{corollary}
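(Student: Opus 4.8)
The plan is to deduce the corollary as a routine application of the long exact sequence of a fibration, the only preliminary being the observation that a fiber bundle is a Serre fibration. First I would upgrade \cref{lemma: we have a fibration} to the statement that $p:\mathbb E \to \mathbb B$ is a Serre fibration: this is the standard fact that every fiber bundle has the homotopy lifting property with respect to disks (equivalently, with respect to all CW pairs), proved by subdividing a lifting problem over a compact cylinder $D^n \times I$ finely enough that each subcube is carried into a trivializing open set of the base and then lifting the subcubes one at a time using the local product structure; see \cite{Hatcher2002AlgebraicTopology}. I would also record that $\mathbb B$ is path-connected --- it is the image under the bundle projection of $\mathbb E$, which by \cref{def: orbispace for braid group} is glued out of the path-connected pieces $\mathbb D^2(d)\times S^1$ and $E_0\times S^\infty$ along the path-connected overlap $D^2_0\times S^\infty\times S^1$ --- so that the homotopy groups below are basepoint-independent and the sequence terminates as displayed.

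Next I would fix a basepoint $b_0\in\mathbb B$, take the fiber $F=p^{-1}(b_0)\cong S^1$, and choose a basepoint of $\mathbb E$ inside $F$. The long exact sequence of homotopy groups of a Serre fibration \cite[Theorem 4.41]{Hatcher2002AlgebraicTopology} then reads
\[
    \cdots \to \pi_n(S^1) \to \pi_n(\mathbb E) \to \pi_n(\mathbb B) \xrightarrow{\ \partial\ } \pi_{n-1}(S^1) \to \cdots,
\]
and truncating at the bottom, using that $S^1$ and $\mathbb B$ are path-connected, yields exactly \eqref{eqn: long exact sequence}.

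I do not expect a genuine obstacle here: all of the content is already packaged into \cref{lemma: we have a fibration}, and the only point requiring (brief) care is the standard upgrade from fiber bundle to Serre fibration together with the path-connectedness of $\mathbb B$. The substantive work comes only afterward, in identifying the terms of \eqref{eqn: long exact sequence} --- which recovers the short exact sequence \eqref{eq: SES central quotient} --- and, crucially, in computing the image of the boundary map $\partial:\pi_2(\mathbb B)\to\pi_1(S^1)\cong\mathbb Z$, whose index in $\mathbb Z$ will be the order of $\Delta^2$ in $B_3(d)$.
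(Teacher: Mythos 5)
Your proposal is correct and follows exactly the paper's route: the paper likewise obtains \eqref{eqn: long exact sequence} by citing \cite[Theorem 4.41]{Hatcher2002AlgebraicTopology} applied to the fiber bundle of \cref{lemma: we have a fibration}. Your added remarks on upgrading a fiber bundle to a Serre fibration and on path-connectedness of $\mathbb B$ are just the standard details the paper leaves implicit.
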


Our main result \cref{thm: order of center} follows from a computation of the boundary map $\partial:\pi_2(\mathbb B) \to \pi_1(S^1)$. The rest of this section is devoted to computing this map as well as the other terms shown in this exact sequence.

\begin{proposition}\label{prop: truncated braid group is pi1}
    There is an isomorphism $\pi_1(\mathbb E) \approx B_3(d)$. The homomorphism $\pi_1(S^1) \to \pi_1(\mathbb E)$ takes the generator $1 \in \mathbb Z$ to  $\Delta^2 \in B_3(d)$.
\end{proposition}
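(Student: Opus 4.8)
The plan is to compute $\pi_1(\mathbb E)$ by van Kampen's theorem, using the presentation of $\mathbb E$ as a pushout in \cref{def: orbispace for braid group}. I would take the open cover $\{U, V\}$ with $U = \mathbb D^2(d) \times S^1$ and $V = E_0 \times S^\infty$; since the pushout glues along open embeddings, both pieces are open in $\mathbb E$, and $U \cap V$ is identified with the gluing locus $W = D^2_0 \times S^\infty \times S^1$, which is path-connected, so we may base all fundamental groups at a point of $W$ whose image in $E_0$ is a generic orbit point of the Seifert fibration. The three groups involved are easy to describe. As $S^\infty$ is contractible, $\pi_1(V) \cong \pi_1(E_0) \cong B_3$ by \cref{prop: braid group is trefoil complement}. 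The $\cyclic d$-action on $D^2 \times S^\infty$ is free (already on the $S^\infty$ factor) and $D^2 \times S^\infty$ is contractible, so $\mathbb D^2(d)$ is a $K(\cyclic d, 1)$; hence $\pi_1(U) \cong \cyclic d \times \mathbb Z$, with $c$ a generator of the $\cyclic d$ factor and $t$ the generator of the $S^1$ factor. Finally $\pi_1(W) \cong \mathbb Z^2$, generated by a loop $\mu$ around the puncture of the $D^2_0$ factor and the loop $\lambda$ running around the $S^1$ factor.

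Next I would compute the two inclusion-induced maps on $\pi_1(W)$. For the map into $\pi_1(U)$: tracing $\mu$ through $\Psi$ of \cref{def:cyclic quotient identifications} shows a lift of $\mu$ to $D^2 \times S^\infty$ runs from a point to the image of that point under a generator of $\cyclic d$, so $\mu \mapsto c$, while $\lambda \mapsto t$ because $\Psi \times \mathrm{id}$ is the identity on the $S^1$ factor. For the map into $\pi_1(V) \cong B_3$: I would introduce coordinates $S^1 \times D^2 \xrightarrow{\approx} T$, $(\zeta, q) \mapsto \zeta \cdot \nu(q)$, on the solid-torus neighborhood $T = S^1 \cdot \nu(D^2)$ of the trefoil (valid because $\nu$ is transverse to the orbits, meets each at most once, and can be taken to avoid the two exceptional fibers); in these coordinates, $\pi_1$ of $T$ minus its core circle is freely generated by a meridian $m$ and the $\nu$-framed longitude $\ell$. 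Reading the formula $N(p, \vec z, \zeta) = (\zeta \cdot \nu(\zeta^{-1} p), \vec z)$ in these coordinates, one finds $\mu \mapsto m$ and $\lambda \mapsto \ell m^{-1}$. By \cref{prop: meridians are generators}, $m$ is conjugate in $B_3$ to a standard generator $\sigma_1$; by \cref{prop: trefoil Seifert structure}, the longitude $\ell$ is a generic Seifert orbit, so $\ell = \Delta^2$ in $B_3$.

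Assembling these, van Kampen yields $\pi_1(\mathbb E) \cong (\cyclic d \times \mathbb Z) \ast_{\mathbb Z^2} B_3$, with the amalgamation identifying $c$ with $m$ and $t$ with $\ell m^{-1} = \Delta^2 m^{-1}$. Eliminating $c$ and $t$, the relation $c^d = 1$ becomes $m^d = 1$, and $[c, t] = 1$ becomes $[m, \Delta^2 m^{-1}] = [m, \Delta^2]$, which already holds because $\Delta^2$ is central in $B_3$. Since $m$ is conjugate to $\sigma_1$ and $\sigma_1$ is conjugate to $\sigma_2$ in $B_3$ (as $(\sigma_1 \sigma_2) \sigma_1 (\sigma_1 \sigma_2)^{-1} = \sigma_2$), the single relation $m^d = 1$ is equivalent to $\sigma_1^d = \sigma_2^d = 1$, and what is left is precisely the presentation of $B_3(d)$. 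For the second assertion, recall that the fiber of $\mathbb E \to \mathbb B = \mathbb E/S^1$ over the chosen basepoint is its $S^1$-orbit, which under the projection $V = E_0 \times S^\infty \to E_0$ is the loop $\ell$; hence the generator $1 \in \pi_1(S^1)$ maps to $\ell = \Delta^2 = (\sigma_1 \sigma_2)^3$ in $\pi_1(\mathbb E) \cong B_3(d)$.

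The step I expect to be the main obstacle is computing the map $\pi_1(W) \to \pi_1(V)$: one must chase the loops $\mu$ and $\lambda$ through the twist built into $N$ while keeping honest coordinates on the tubular neighborhood of the trefoil. In particular, the image of $\lambda$ is a generic orbit composed with the inverse of a meridian rather than the orbit itself, so one has to resist conflating the two — though this discrepancy is conveniently annihilated by the centrality of $\Delta^2$ and has no effect on the final presentation.
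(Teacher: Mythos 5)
Your proof is correct and follows essentially the same route as the paper: Seifert--van Kampen applied to the defining pushout decomposition of $\mathbb E$, with $\pi_1(\mathbb D^2(d) \times S^1) \cong \cyclic{d} \times \mathbb Z$, $\pi_1(E_0 \times S^\infty) \cong B_3$, the $\cyclic{d}$ generator identified with a meridian (hence a conjugate of a half-twist generator), and the $S^1$ generator identified with the full twist. The one place you go beyond the paper is in tracking the twist built into the gluing map $N$, so that the $S^1$-factor loop maps to $\ell m^{-1}$ rather than to the orbit $\ell$ itself --- a discrepancy the paper elides and which, as you note, is harmless because $\Delta^2$ is central.
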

\begin{proof}
    Note that $$\pi_1(\mathbb D^2(d) \times S^1) = \pi_1(\mathbb D^2(d)) \times \pi_1(S^1) = \cyclic d \times \mathbb Z.$$ The identification of $\pi_1(\mathbb D^2(d))$ follows because $\mathbb D^2(d)$ is the quotient of a contractible space by a covering space action of the cyclic group $\cyclic{d}$.

    Also we have $$\pi_1(E_0 \times S^\infty) = \pi_1(E_0) \times \pi_1(S^\infty) = B_3 \times \{1\}$$
    as was shown in \cref{prop: braid group is trefoil complement}.

These two spaces are glued together along a subspace homeomorphic to $D^2_0 \times S^\infty \times S^1$ to construct $\mathbb E$. The Seifert-Van Kampen theorem allows us to compute the new fundamental group (as in e.g. \cite{Hatcher2002AlgebraicTopology}). In our case, the $D^2_0$ factor identifies the $\cyclic d$ generator with the meridian loop in $E_0$, i.e. a half twist generator $\sigma_i \in B_3$ as per \cref{prop: meridians are generators}. The $S^1$ factor identifies the generator of $\mathbb Z$ with the orbit of the $S^1$ action in $E_0$, i.e. the full twist $\Delta^2 \in B_3$ as per \cref{prop: trefoil Seifert structure}. The resulting fundamental group is $B_3(d)$.
\end{proof}

To continue, we recall a construction of Henriques.
\begin{definition}[\cite{Henriques2001OrbispacesDefinition}]
    For a continuous map $p:X \to Y$, the \textit{fiberwise-$\pi_0$} is defined to be the space
    \begin{equation*}
        \fibpizero(X \xrightarrow{p} Y) = X/\sim
    \end{equation*}
    where $x_1 \sim x_2$ if and only if $p(x_1)=p(x_2)=y \in Y$ and $x_1$ and $x_2$ lie in the same path component of $p^{-1}(y)$.
\end{definition}

The fiberwise-$\pi_0$ gives a nice formalization of the local branched cover of an orbifold. We use this perspective in the next proposition.

\begin{proposition}\label{prop: order of triangle group}
    The order of $\pi_1(\mathbb B)$ is given by $3\cdot f(3,d)$.
\end{proposition}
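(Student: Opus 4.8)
The plan is to show $\pi_1(\mathbb B)\cong\Delta(2,3,d)$, the orientation-preserving $(2,3,d)$-triangle group, and then invoke the classical value of its order. By \cref{lemma: E is the 3-sphere} we identify $B$ with $\mathbb{CP}^1$. First I would take pairwise disjoint closed disks about $[1:0]$, $[0:1]$, and $[27:-4]$, let $P$ be the complementary thrice-punctured sphere, thicken these four pieces slightly to open sets, and pull back along $p_B$. By \cref{prop: structure of the base space projection} the preimages are $\mathbb D^2(2)$, $\mathbb D^2(3)$, $\mathbb D^2(d)$, and an $S^\infty$-bundle over $P$. Computing fundamental groups: since $S^\infty$ is contractible, the homotopy long exact sequence of the bundle over $P$ gives $\pi_1(p_B^{-1}(P))\cong\pi_1(P)=\langle x_2,x_3,x_d\mid x_2x_3x_d=1\rangle$, free of rank two with $x_k$ a meridian of the corresponding puncture; each orbidisk $\mathbb D^2(k)=(D^2\times S^\infty)/\cyclic k$ is a free quotient of a contractible space, so $\pi_1(\mathbb D^2(k))\cong\cyclic k$; and the overlaps are punctured orbidisks $\mathbb D^2_0(k)\approx D^2_0\times S^\infty\simeq S^1$, each with fundamental group $\mathbb Z$.

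The main step is to run Seifert--van Kampen on the cover $\mathbb B=p_B^{-1}(P)\cup\mathbb D^2(2)\cup\mathbb D^2(3)\cup\mathbb D^2(d)$, for which the only real content is the peripheral data. For each $k\in\{2,3,d\}$ one must check that a generator of $\pi_1(\mathbb D^2_0(k))\cong\mathbb Z$ maps to $x_k$ under inclusion into $p_B^{-1}(P)$ and to a \emph{generator} of $\cyclic k=\pi_1(\mathbb D^2(k))$ under inclusion into the orbidisk. The latter is the assertion that $D^2\xrightarrow{z\mapsto z^k}D^2/\cyclic k$ is the local branched cover of the orbidisk, which $\fibpizero$ of the universal cover $D^2\times S^\infty\to\mathbb D^2(k)$ makes precise: via the homeomorphism $\Psi$ of \cref{def:cyclic quotient identifications}, the meridian $\theta\mapsto(e^{i\theta},\vec z_0)$ of $D^2_0\times S^\infty$ corresponds to the loop $\theta\mapsto\cyclic k(e^{i\theta/k},e^{i\theta/k}\vec z_0)$ in $\mathbb D^2_0(k)\subset\mathbb D^2(k)$, which lifts to a path from $(1,\vec z_0)$ to $e^{2\pi i/k}\cdot(1,\vec z_0)$ in $D^2\times S^\infty$ and so represents a generator of the deck group $\cyclic k$; the former is simply that the same meridian descends under $p_D$ (and the identification $\psi$) to a once-around loop in the disk. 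Feeding this into van Kampen and eliminating $x_d=(x_2x_3)^{-1}$ gives
\begin{equation*}
    \pi_1(\mathbb B)\;\cong\;\langle\, x_2,x_3,x_d\ \mid\ x_2x_3x_d=1,\ x_2^{2}=x_3^{3}=x_d^{d}=1\,\rangle\;\cong\;\langle\, x,y\ \mid\ x^2=y^3=(xy)^d=1\,\rangle\;=\;\Delta(2,3,d).
\end{equation*}
Together with \cref{cor: long exact sequence} and \cref{prop: truncated braid group is pi1} this also recovers the short exact sequence \eqref{eq: SES central quotient}, since $\pi_1(\mathbb B)=\pi_1(\mathbb E)/\operatorname{im}\pi_1(S^1)=B_3(d)/\langle\Delta^2\rangle$.

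It then remains to recall the classical fact $|\Delta(2,3,d)|=3\,f(3,d)$: for $2\le d\le5$, subdividing each of the $f(3,d)$ faces of the $\{3,d\}$ tiling of $S^2$ into six triangles by its medians exhibits the full $(2,3,d)$ reflection group acting simply transitively on the $6f(3,d)$ small triangles, with $\Delta(2,3,d)$ its orientation-preserving index-two subgroup; for $d\ge6$ the $(2,3,d)$ orbifold is Euclidean or hyperbolic and $\Delta(2,3,d)$, like $f(3,d)$, is infinite. (One sees the same thing inside the present framework: $\fibpizero$ of the composite $\widetilde{\mathbb B}\to\mathbb{CP}^1$ from the universal cover is a branched cover $\widetilde B\to\mathbb{CP}^1$ ramified exactly over the three marked points with indices $2,3,d$; the local models of \cref{prop: structure of the base space projection} make $\widetilde{\mathbb B}\to\widetilde B$ an $S^\infty$-bundle, so $\widetilde B$ is a simply-connected surface of degree $|\pi_1(\mathbb B)|$ over $\mathbb{CP}^1$; were $\pi_1(\mathbb B)$ finite, $\widetilde B$ would be closed, hence $\cong S^2$, and Riemann--Hurwitz would force $|\pi_1(\mathbb B)|=\tfrac{2}{1/2+1/3+1/d-1}=\tfrac{12d}{6-d}$ --- a positive integer only for $d\le5$ --- so $\pi_1(\mathbb B)$ is infinite otherwise.) I expect the one genuine obstacle to be the peripheral bookkeeping above: ensuring that no spurious factor of $k$ creeps in on either side of the gluing, which is precisely what the identifications $\psi$ and $\Psi$ were built to control; everything else is a routine application of van Kampen, or, in the alternative, of Riemann--Hurwitz.
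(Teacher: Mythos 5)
Your argument is correct, but it takes a genuinely different route from the paper. You compute $\pi_1(\mathbb B)$ by Seifert--van Kampen on the decomposition of $\mathbb B$ into the three orbidisks and the $S^\infty$-bundle over the thrice-punctured sphere, obtaining the presentation of $\Delta(2,3,d)$ directly, and then quote (and justify) the classical count $|\Delta(2,3,d)|=3f(3,d)$ via the barycentric subdivision of the $\{3,d\}$ tiling. The paper never writes down a presentation: it instead builds the simply-connected branched cover $X\to B$ by hand out of $3f(3,d)$ quadrilateral tiles, checks via $\fibpizero$ that the pullback $S^\infty$-bundle over $X$ is the universal cover $\widetilde{\mathbb B}$, and reads off $|\pi_1(\mathbb B)|$ as the covering degree. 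Your peripheral bookkeeping (that the meridian of $\mathbb D^2_0(k)\approx D^2_0\times S^\infty$ maps to a \emph{generator} of $\cyclic{k}$, verified through $\Psi$) is exactly the point that needs care, and you handle it correctly; as a bonus, your computation actually proves the identification $\pi_1(\mathbb B)\cong\Delta(2,3,d)$ underlying the sequence \eqref{eq: SES central quotient}, which the paper asserts but does not derive in the orbispace framework. What the paper's construction buys in exchange is reusability: the explicit tiled surface $X$ and the triviality of $\widetilde{\mathbb B}\to X$ as an $S^\infty$-bundle are exactly the inputs to the computation of $\pi_2(\mathbb B)$ and of the boundary map in \cref{prop: boundary map}, so if you adopt the van Kampen route here you will still need the content of your parenthetical (the $\fibpizero$ branched cover and the Riemann--Hurwitz/Euler characteristic count) later anyway.
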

\begin{proof}
    This is a classical result (with the right setup), but we give a proof again here for completeness. The strategy is to compute the degree of the universal cover $\widetilde{\mathbb B}$ of $\mathbb B$. This will come down to considering the following commutative diagram.
    
    \begin{center}
        \begin{tikzcd}
            \widetilde{\mathbb B} \arrow[r] \arrow[d] & \mathbb B \arrow[d,"p_B"] &[-25pt]\\
            \fibpizero(\widetilde{\mathbb B} \to B) \arrow[r] & B \arrow[r,phantom,"\approx"] & \mathbb{CP}^1
        \end{tikzcd}
    \end{center}

    As per \cref{prop: structure of the base space projection}, the map $p_B$ is an $S^\infty$ bundle over all but three points of $B$. Since each $S^\infty$ fiber is simply connected, it lifts to $\widetilde{\mathbb B}$. Thus the full preimage of each such fiber in $\widetilde{\mathbb B}$ is a disjoint union of copies of $S^\infty$. Taking the fiberwise-$\pi_0$ of the map $\widetilde{\mathbb B} \to B$ collapses each of these copies of $S^\infty$ to a point, and we see that the resulting map $\fibpizero(\widetilde{\mathbb B} \to B) \to B$ is a covering space away from the three special points of $B$. Considering those points as well, the map is a branched cover. The generic degree of the branched cover is the same as the degree of the cover $\widetilde{\mathbb B} \to \mathbb B$, and this is the quantity we seek.

    So now we consider this branched covering of $B$ branched along 3 points. Considering the local structure over those points, again as provided by \cref{prop: structure of the base space projection}, we see that $\fibpizero(\widetilde{\mathbb B} \to B)$ will have the local structure of a branched cover of the disk with branching order dividing 2, 3, or $d$ (according to the point in question).
    
    We construct a branched cover $X \to B$ and will see that $X$ is our $\fibpizero(\widetilde{\mathbb B} \to B)$. So split $B \approx \mathbb{CP}^1$ along a pair of embedded arcs disjoint on their interiors, one arc from $[0:1]$ to $[27:-4]$ and the other from $[27:-4]$ to $[1:0]$. The resulting topological quadrilateral is simply connected and has no branch points in its interior. Create a surface $X$ by gluing edges of this quadrilateral edge-to-edge with 2 copies around preimages of $[0:1]$, 3 copies around preimages of $[1:0]$, and $2d$ copies around preimages of $[27:-4]$. See \cref{fig:branched_cover}. Now $X$ is a surface with a branched covering to $B$ of the appropriate local branching orders. The quadrilateral tiles fit together into a tiling of $X$ by ``equilateral'' triangles that meet $d$ at each vertex. Each such triangle is tiled by three copies of the quadrilateral. 

    \begin{figure}
        \centering
        {\includegraphics[page=2,
            viewport={1.0in 2.0in 5.0in 4.6in},
            clip, scale=0.9]{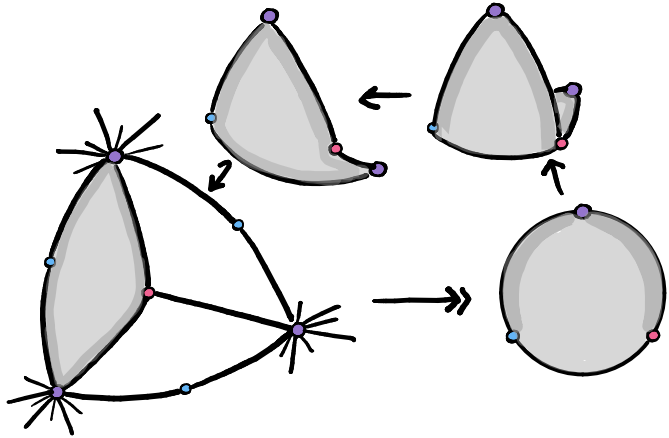}}
        \caption[Branched cover of the triangle orbifold]{The branched cover of $B$ is constructed by splitting along two arcs and arranging copies of the resulting figure. The lifts form into ``equilateral'' triangles with $d$ at each vertex.}
        \label{fig:branched_cover}
    \end{figure}

    The surface $X$ is a sphere when $d \leq 5$ and is contractible when $d \geq 6$. In any case, $X$ is simply-connected. The pullback of $\mathbb B \to B$ along $X \to B$ is an $S^\infty$ bundle over $X$. This space is simply connected, i.e. it is the universal cover of $\mathbb B$. This shows that $X = \fibpizero(\widetilde{\mathbb B} \to B)$.

    Finally to understand the degree of the coverings here, note that we have described a regular tiling of $X$ by triangles with $d$ at each vertex. There are $f(3,d)$-many of these triangles by definition. Each triangle is tiled by 3 copies of the lifted quadrilateral, each of which projects injectively to $B$ on its interior. Thus the branched-cover $X \to B$ is $3\cdot f(3,d)$-to-$1$ away from the branch set. This is the same as the degree of the cover $\widetilde{\mathbb B} \to \mathbb B$, giving our result.
    

\end{proof}

\begin{remark}
        Geometrically, one can take $X$ in the above proof to be the round sphere ($d \geq 5$), the Euclidean plane ($d=6$), or the hyperbolic plane ($d \geq 7$). The group of deck transformations of the branched-cover can be made to act by isometries on $X$, giving an action of the group on the principal $S^1$-bundle that is the unit tangent bundle of $UT(X)$. Then the group acts on the the associated $S^\infty$ bundle over $X$, which we denote $\mathbb X$. The structure group of these $S^\infty$ bundles lies in the unitary group $U(\ell^2)$, which is contractible by Kuiper's theorem \cite{Kuiper1965TheSpace}. Thus these $S^\infty$ bundles are trivial, as is the $S^\infty$ bundle $\widetilde{\mathbb B} \to X$. In this way we gain the perspective $B \approx X/\Delta(2,3,d)$ and ${\mathbb B} \approx \mathbb X/\Delta(2,3,d)$.
\end{remark}

To finish our analysis, we need a quick lemma from low-dimensional topology.

\begin{lemma}\label{lemma: linking numbers}
    Any two regular orbits of the action $S^1 \curvearrowright S^3$ have linking number 6 (viewed as oriented knots in the 3-sphere).
\end{lemma}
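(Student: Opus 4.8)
The plan is to show that $\mathrm{lk}(O,O')$ does not depend on the choice of the two regular orbits, and then to compute it for one convenient pair, for which the two orbits become parallel $(2,3)$-torus knots on a Heegaard torus of $S^3$. For the first point: distinct orbits of $S^1 \curvearrowright S^3$ are disjoint, the regular orbits are exactly those lying over points of $\mathbb{CP}^1 = S^3/S^1$ other than the two cone points $[1:0]$ and $[0:1]$, and the orbits together with their flow orientations vary continuously with the base point. Since the space of ordered pairs of distinct points of $\mathbb{CP}^1 \setminus \{[1:0],[0:1]\}$ is path-connected, any two such pairs of oriented orbits are isotopic in $S^3$, so the linking number is constant; it then suffices to compute it for one pair.

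For the convenient pair I would take the orbits $O$ and $O'$ through $(1/\sqrt2,1/\sqrt2)$ and $(1/\sqrt2,-1/\sqrt2)$, both of which lie on the Clifford torus $T = \{|a| = |b| = 1/\sqrt2\}$ bounding the solid tori $V_a = \{|a| \le 1/\sqrt2\}$ and $V_b = \{|b| \le 1/\sqrt2\}$ of the genus-one Heegaard splitting of $S^3$. In the basis of $H_1(T)$ consisting of the $\arg a$-circle $\mu_a$ (a meridian of $V_a$, a longitude of $V_b$) and the $\arg b$-circle $\mu_b$ (a longitude of $V_a$, a meridian of $V_b$), the orbit $t \mapsto (e^{2it}a_0, e^{3it}b_0)$ has homology class $2\mu_a + 3\mu_b$. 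Thus $O$ is a $(2,3)$-torus knot and $O'$ is a push-off of it along the Heegaard torus.

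To compute $\mathrm{lk}(O,O')$, push $O$ into $\operatorname{int} V_a$ and $O'$ into $\operatorname{int} V_b$, obtaining disjoint curves $\widehat O \subset \operatorname{int} V_a$ and $\widehat O' \subset \operatorname{int} V_b$ with $\mathrm{lk}(O,O') = \mathrm{lk}(\widehat O, \widehat O')$. By Alexander duality $H_1(S^3 \setminus \widehat O') \cong \mathbb{Z}$, generated by a meridian of $\widehat O'$, and $\mathrm{lk}(\widehat O, \widehat O')$ is the image of $[\widehat O]$ in this group. Since $V_a \subset S^3 \setminus \widehat O'$, that image factors through $H_1(V_a) \cong \mathbb{Z}\langle [C_a]\rangle$, where $C_a = \{a = 0\}$ is the core of $V_a$; and $[\widehat O] = 3[C_a]$ there, since $\mu_a$ bounds in $V_a$. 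Finally $[C_a]$ maps to $\mathrm{lk}(C_a, \widehat O')$, and since $C_a$ is isotopic to $\mu_b = \partial \Delta_b$ for a meridian disk $\Delta_b$ of $V_b$, this equals the algebraic intersection number $\widehat O' \cdot \Delta_b = 2$ (the winding of $\widehat O'$ about the core of $V_b$). Hence $\mathrm{lk}(O,O') = 3 \cdot 2 = 6$.

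The only delicate point — and the one I expect to require the most care — is keeping the meridian/longitude conventions for $V_a$ and $V_b$, and the orientation of $S^3$, consistent throughout, so that the factors $2$ and $3$ and the overall sign come out correctly (everything is in fact oriented by the ambient complex structure and the flow of the action, which yields the sign $+6$). If one prefers to avoid this bookkeeping, one can instead invoke the classical fact that the framing of the $(p,q)$-torus knot induced by the Heegaard torus equals $pq$, specialized to $(p,q) = (2,3)$, or deduce the linking number from the Euler number $\pm\frac{1}{6}$ of this Seifert fibration of $S^3$.
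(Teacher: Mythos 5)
Your proof is correct, but it takes a genuinely different route from the paper. The paper's argument is diagrammatic: it places two regular orbits on a common unknotted torus, exhibits a link diagram of two parallel $(2,3)$-torus knots with 12 positive inter-component crossings, and divides by 2. Your argument instead (i) makes explicit the reduction to a single convenient pair, via the connectivity of the configuration space of two points in $\mathbb{CP}^1\setminus\{[1{:}0],[0{:}1]\}$ and the resulting link isotopy, and (ii) computes the linking number homologically through the genus-one Heegaard splitting, pushing the two curves to opposite sides of the Clifford torus and tracking classes through $H_1(V_a)\to H_1(S^3\setminus\widehat O')$ to get $3\cdot 2=6$ --- essentially the standard proof that the torus framing of a $(p,q)$-torus knot is $pq$. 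What your approach buys: the independence of the choice of orbits is actually proved rather than left implicit (not all pairs of regular orbits lie on a common torus, so this step is not vacuous), the sign and the factors $2$ and $3$ are traced to the weights of the action rather than read off a picture, and the computation visibly generalizes to any weighted Seifert fibration of $S^3$. What the paper's approach buys is brevity and a self-contained figure. The only point I would tighten in your write-up is the orientation bookkeeping you yourself flag: state once that all curves carry the flow orientation and all disks the complex orientation, so that both intersection numbers come out $+$; as written the conclusion is right but the reader has to trust your parenthetical.
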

\begin{proof}
    There are many ways to compute this. In \cref{fig:linked_trefoils} we show two regular orbits sitting on a common standard torus in $S^3$. Considering this as an oriented link diagram, we can count the crossings of the two components (with sign) and divide by 2 to get the linking number of the two knots (see e.g. \cite[\textsection 5.D]{Rolfsen2003KnotsLinks}).

    \begin{figure}
        \centering
        {\includegraphics[page=2,
            viewport={0.8in 8.6in 2.8in 10.6in},
            clip]{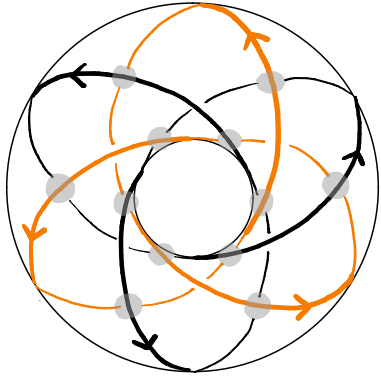}}
        \caption[Two linked trefoils]{Two oriented trefoil knots on an unknotted torus with the 12 component-wise crossings marked.}
        \label{fig:linked_trefoils}
    \end{figure}

    We see that there are 12 crossings, all positively oriented. This gives the lemma.
\end{proof}

\begin{proposition}\label{prop: boundary map}
    When $d \leq 5$, we have $\pi_2(\mathbb B) \approx \mathbb Z$. Otherwise, $\pi_2(\mathbb B)$ is trivial. In either case, the boundary map $\partial:\pi_2(\mathbb B) \to \pi_1(S^1)$ has image $\frac{f(3,d)}{2}\mathbb Z$ in $\pi_1(S^1) \approx \mathbb Z$ (where we interpret this as the trivial subgroup when $f(3,d)$ is infinite).
\end{proposition}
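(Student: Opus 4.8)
The plan is to trade the connecting map $\partial$ for an Euler number, then to pin that Euler number down using the linking number of regular Seifert fibers; the group $\pi_2(\mathbb B)$ itself falls out of passing to the universal cover.

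\emph{First, $\pi_2(\mathbb B)$ and the case $d \geq 6$.} From the analysis in the proof of \cref{prop: order of triangle group}, the universal cover $\widetilde{\mathbb B}$ is an $S^\infty$-bundle over the branched cover $X$ of $B$, hence (as $S^\infty$ is contractible) homotopy equivalent to $X$. Since $X$ is a $2$-sphere when $d \leq 5$ and is contractible when $d \geq 6$, this gives $\pi_2(\mathbb B) \cong \pi_2(\widetilde{\mathbb B}) \cong \pi_2(X)$, which is $\mathbb Z$ in the first range and trivial in the second. For $d \geq 6$ the map $\partial$ is then forced to vanish, and because $f(3,d) = \infty$ there this is precisely the asserted image $\frac{f(3,d)}{2}\mathbb Z$ under the stated convention. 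The rest of the argument concerns $d \leq 5$.

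\emph{Next, reduce $\partial$ to an Euler number.} I would pull the principal $S^1$-bundle $\mathbb E \to \mathbb B$ back along the universal covering $\widetilde{\mathbb B} \to \mathbb B$, obtaining a principal $S^1$-bundle $\widetilde{\mathbb E} \to \widetilde{\mathbb B}$; by naturality of the sequence in \cref{cor: long exact sequence} our $\partial\colon\pi_2(\mathbb B)\to\pi_1(S^1)$ is the connecting map of this new bundle. As $\widetilde{\mathbb B}$ is simply connected with $\pi_2 \cong H_2 \cong \mathbb Z$, this connecting map factors as the Hurewicz isomorphism followed by evaluation against the Euler class, so its image is the subgroup of $\pi_1(S^1) \cong \mathbb Z$ generated by the Euler number of $\widetilde{\mathbb E} \to \widetilde{\mathbb B}$ (concretely, this is the clutching-function description of $S^1$-bundles over $S^2$, checked against the Hopf bundle and the lens spaces $L(n,1)$). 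So everything reduces to showing that this Euler number equals $\pm\frac{f(3,d)}{2}$.

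\emph{Then, compute the Euler number.} The bundle $\mathbb E \to \mathbb B$ is the orbifold circle bundle underlying the Seifert fibration of \cref{prop: trefoil Seifert structure}: over $B \setminus \{[27:-4]\}$ it is (an $S^\infty$-thickening of) the Seifert fibration of the trefoil complement, while over $[27:-4]$ the orbidisk construction glues in a \emph{locally trivial} circle bundle over the order-$d$ orbidisk -- this should be read off from \cref{prop: structure of the base space projection} together with the fact that the relation being imposed is $\sigma_i^d = 1$, with no power of $\Delta^2$. Granting this, the orbifold Euler number of $\mathbb E \to \mathbb B$ is insensitive to the order-$d$ cone point and agrees with that of the Seifert fibration $S^3 \to \mathbb{CP}^1$ of \cref{prop: trefoil base orbifold} (the quotient $\mathbb{CP}^1$ carrying cone points of orders $2$ and $3$). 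Now \cref{lemma: linking numbers} says that two regular fibers of $S^1\curvearrowright S^3$ link with number $6$; since $S^3$ is a homology sphere, this forces that orbifold Euler number to be $\pm\frac16$ (minus the reciprocal of the linking number of regular fibers; equivalently $1 = |H_1(S^3)| = 6\,|e_{\mathrm{orb}}|$). Finally, the branched cover $X \to \mathbb{CP}^1$ underlying $\widetilde{\mathbb B}\to\mathbb B$ has degree $|\Delta(2,3,d)| = 3f(3,d)$ by the proof of \cref{prop: order of triangle group}, and pulling an orbifold circle bundle back along a degree-$m$ orbifold cover scales its Euler number by $m$; hence the Euler number of $\widetilde{\mathbb E}\to\widetilde{\mathbb B}$ is $3f(3,d)\cdot\bigl(\pm\frac16\bigr) = \pm\frac{f(3,d)}{2}$, as needed.

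\emph{The hard part.} I expect the real obstacle to be the claim that the orbidisk filling leaves the orbifold Euler number unchanged -- that \cref{def: orbispace for braid group} glues in a genuinely trivial circle bundle over the order-$d$ orbidisk rather than a twisted one. Pinning this down means tracing the identifications $\Psi$ and $N$ along the gluing torus and checking how the core of the orbidisk meets a regular Seifert fiber. A secondary, more clerical difficulty is stating the multiplicativity of the Euler number under the orbifold cover $\widetilde{\mathbb B}\to\mathbb B$, and the linking-number-to-Euler-number passage, cleanly in the orbispace language. If that proves awkward, a self-contained alternative is to compute the clutching function of $\widetilde{\mathbb E}\to\widetilde{\mathbb B}\simeq X$ directly from the regular tiling of $X$ by $f(3,d)$ triangles, using $\Delta(2,3,d)$-equivariance to reduce to the contribution of a single triangle -- which comes out to $3\cdot\frac16 = \frac12$, the $\frac16$ being once again \cref{lemma: linking numbers}.
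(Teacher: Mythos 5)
Your proposal is correct in substance, but it routes the computation through the orbifold Euler number formalism, whereas the paper works directly with an explicit lifted disk; the two arguments consume the same two numerical inputs (the linking number $6$ of \cref{lemma: linking numbers} and the branched-covering degree $3f(3,d)$ from \cref{prop: order of triangle group}) but package them differently. The paper realizes a generator of $\pi_2(\mathbb B)\cong\pi_2(X)$ as a map $\delta:(D^2,\partial D^2)\to(\mathbb B,\mathrm{pt})$ that is generically $3f(3,d)$-to-$1$ onto $B$, lifts it to $\tilde\delta:D^2\to\mathbb E$, projects to $E\approx S^3$, and pins down the multiplicity $k$ with which $\partial\tilde\delta$ wraps the fiber $F$ by computing the linking number with a second regular fiber $F'$ in two ways: it is $6k$ by the linking lemma, and it equals the algebraic intersection number $\hat i(\tilde\delta,F')=\hat i(\delta,\mathrm{pt}')=3f(3,d)$ because $\partial\tilde\delta$ bounds the disk $\tilde\delta$; hence $k=f(3,d)/2$. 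This sidesteps both points you flag as delicate: no multiplicativity statement for the orbifold Euler number is needed, and the orbidisk filling requires no separate analysis, since $F'$ can be taken in $E_0$ away from the filling region and the intersection count only sees the covering degree. That said, your ``hard part'' does check out: the $S^1$-action on $\mathbb D^2(d)\times S^1$ is free with quotient $\mathbb D^2(d)$ and admits the evident section at $\zeta=1$ --- this is exactly what the transversality of $\nu$ to the orbits buys --- so the bundle is a product over the orbidisk and the orbifold Euler number of $\mathbb E\to\mathbb B$ is indeed the $\pm\frac16$ of the weighted Hopf fibration. In short, your route buys a cleaner conceptual statement ($\partial$ is evaluation of the Euler class, which is multiplicative under covers) at the cost of importing and justifying Seifert-fibration bookkeeping in the orbispace setting; the paper's route buys a self-contained computation inside $S^3$ at the cost of being more hands-on.
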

\begin{proof}
    Since higher homotopy groups are preserved by covers, we have that $\pi_2(\mathbb B)$ is identical to $\pi_2$ of the universal cover $\widetilde{\mathbb B}$. But as in the proof of \cref{prop: order of triangle group}, the space $\widetilde{\mathbb B}$ is an $S^\infty$-bundle over the model geometry $X$. Since $S^\infty$ is contractible, this gives $\pi_2(\mathbb B) \approx \pi_2(\widetilde{\mathbb B}) \approx \pi_2(X)$. This group is $\mathbb Z$ when $X$ is spherical (i.e. when $d \leq 5$) and is trivial when $X$ is contractible (i.e. when $d \geq 6$). For the rest of the proof, suppose $d \leq 5$ so that we are in the nontrivial case with $X \approx S^2$. The reader should have the construction of \cref{prop: order of triangle group} in mind for the rest of this proof.

    Now recall how the boundary map in a long exact sequence works. An element of $\pi_2(\mathbb B)$ is represented by a map $\delta:(D^2,\partial D^2) \to (\mathbb B, \text{pt})$. The homotopy lifting property of fiber bundles lets us lift $\delta$ to a map $\tilde \delta:(D^2,\partial D^2) \to (\mathbb E, F)$ where $F$ is the circle fiber over the chosen basepoint of $\mathbb B$. The boundary of this lifted map is a loop $\partial D^2 \to F$, and then the induced element in the fundamental group is the image of the element $\delta \in \pi_2(\mathbb B)$ in the fiber group $\pi_1(F)$.

    In our case, $\pi_2(\mathbb B) \approx \pi_2(\widetilde{\mathbb B})$ is infinite cyclic, generated by a map $(D^2,\partial D^2) \to \widetilde{\mathbb B}$ that projects to $\delta:D^2 \to \mathbb B$ which then lifts to $\tilde \delta:D^2 \to \mathbb E$. We arrange things so that $D^2 \to \tilde{\mathbb B} \xrightarrow{\sim} S^2$ is smooth, 1-to-1 on its interior, and maps the boundary of the disk to a point not fixed by the triangle group action. Then $D^2 \xrightarrow{\delta} \mathbb{B} \xrightarrow{p_B} B$ is smooth and generically $3f(3,d)$-to-1 by the constructions of \cref{prop: order of triangle group}. We'll conflate notation now and write $\delta$ for $p_B \circ \delta$ and $\tilde \delta$ for $p_E \circ \tilde \delta$.

    \begin{figure}
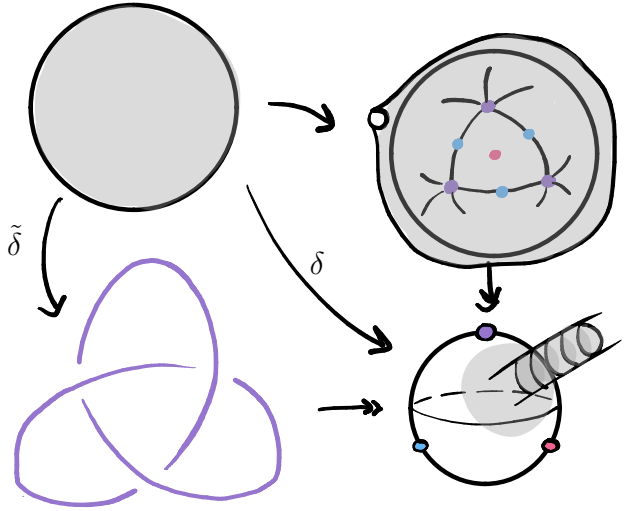

        \centering

        \begin{overpic}[
            page=2,
            viewport={4.1in 7.6in 7.4in 10.4in},
            clip, grid=false
        ]{figures.pdf}
            \put(2,43){$\tilde \delta$}
            \put(50,40){$\delta$}
        \end{overpic}
        \caption[Boundary map]{A generator $\delta$ of $\pi_2(\mathbb B)$ lifts to a map $\tilde \delta:D^2 \to \mathbb E$ with boundary along a fiber. The map $\delta$ is generically ($3 \cdot f(3,d)$)-to-1.}
        \label{fig:boundary_map}
    \end{figure}

    So then $\partial \tilde \delta$ is a smooth immersed curve in the 3-sphere. It is some multiple $k$ of a generic circle fiber $F$ of the Seifert fibration, and we can compute its linking number with another generic fiber $F'$. On the one hand, 
    \begin{align*}
        \lk(\partial \tilde \delta,F') = k \cdot \lk(F,F') = 6k
    \end{align*}
    by \cref{lemma: linking numbers}. On the other hand, this curve bounds a disk and so we can consider algebraic intersection number $\hat i$. This gives
    \begin{equation*}
        \lk(\partial \tilde \delta,F') = \hat i(\tilde \delta,F') = \hat i(\delta,\text{pt}') = \ 3 \cdot f(3,d).
    \end{equation*}
    The second equality comes from projecting to $B$ and the third is just the degree of $\delta$ as above. Comparing the results we see that $k$, the degree of the map $\partial \tilde \delta \to F$, is equal to $f(3,d)/2$. This completes the proof.
\end{proof}

Now in the case when $d \leq 5$, we can relabel the pieces of the exact sequence \eqref{eqn: long exact sequence} as follows:

\begin{adjustbox}{width=1.1\textwidth,center}
\begin{tikzcd}
\dotsb \arrow[r] & \pi_2(\mathbb B) \arrow[r, "\partial"] \arrow[d, "\approx"] & \pi_1(S^1) \arrow[r] \arrow[d, "\approx"] & \pi_1(\mathbb E) \arrow[r] \arrow[d, "\approx"] & \pi_1(\mathbb B) \arrow[r] \arrow[d, "\approx"] & \pi_0(S^1) \arrow[r] \arrow[d, "\approx"] & \dotsb \\
                 & \mathbb Z \arrow[r, "{\times \frac{f(3,d)}{2}}"]            & \mathbb Z \arrow[r, "1 \mapsto \Delta^2"]          & B_3(d) \arrow[r]                                     & {\Delta(2,3,d)} \arrow[r]                       & \{1\}                                     &       
\end{tikzcd}
\end{adjustbox}

In the case $d \geq 6$, the lower left term is the trivial group. This gives \cref{thm: order of center} as well as Coxter's formula \eqref{eq: Coxeter formula} in the $n=3$ case:

\begin{corollary}\label{cor: Coxeter's formula in n=3}
    The order of the truncated braid group $B_3(d)$ is
    \begin{equation*}
        \frac{f(3,d)}{2} \cdot 3f(3,d) = \left( \frac{f(3,d)}2 \right)^2 3!.
    \end{equation*}
\end{corollary}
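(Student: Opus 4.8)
The plan is to read everything off the long exact sequence of \cref{cor: long exact sequence}, since by this point all of the substantive work is done. First I would assemble the identifications already established and substitute them into \eqref{eqn: long exact sequence}: by \cref{prop: truncated braid group is pi1} we have $\pi_1(\mathbb E)\approx B_3(d)$ with the map $\pi_1(S^1)\to\pi_1(\mathbb E)$ sending the generator $1\in\mathbb Z$ to $\Delta^2$; by \cref{prop: order of triangle group} the group $\pi_1(\mathbb B)\approx\Delta(2,3,d)$ has order $3f(3,d)$; the space $S^1$ is connected so $\pi_0(S^1)$ is trivial; and by \cref{prop: boundary map} the boundary map $\partial\colon\pi_2(\mathbb B)\to\pi_1(S^1)\approx\mathbb Z$ has image $\tfrac{f(3,d)}{2}\mathbb Z$ (the trivial subgroup when $d\geq 6$). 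This yields exactly the labeled diagram displayed just before the corollary.

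Next I would extract two facts from exactness. Exactness at $\pi_1(S^1)$ identifies the kernel of $\mathbb Z\to B_3(d)$, $1\mapsto\Delta^2$, with the image of $\partial$, namely $\tfrac{f(3,d)}{2}\mathbb Z$; hence the cyclic subgroup $\langle\Delta^2\rangle\leq B_3(d)$ is isomorphic to $\mathbb Z/\tfrac{f(3,d)}{2}\mathbb Z$ and therefore has order $\tfrac{f(3,d)}{2}$ (which is $\infty$ when $d\geq 6$, where $\pi_2(\mathbb B)$ vanishes and the map $\mathbb Z\to B_3(d)$ is injective). This is precisely the content of \cref{thm: order of center}. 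Then exactness at $\pi_1(\mathbb E)$ and at $\pi_1(\mathbb B)$, together with $\pi_0(S^1)=\{1\}$, shows that $B_3(d)\to\Delta(2,3,d)$ is surjective with kernel exactly $\langle\Delta^2\rangle$, recovering the short exact sequence \eqref{eq: SES central quotient}.

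Finally, multiplying orders across that short exact sequence gives
\[
|B_3(d)| \;=\; |\langle\Delta^2\rangle|\cdot|\Delta(2,3,d)| \;=\; \frac{f(3,d)}{2}\cdot 3f(3,d) \;=\; \left(\frac{f(3,d)}{2}\right)^2 3!,
\]
which is the claimed formula, and both sides are simultaneously finite exactly when $2\leq d\leq 5$. I do not expect any genuine obstacle here: the corollary is pure bookkeeping, and the only real work -- the computation of $\partial$ via the linking number of \cref{lemma: linking numbers} -- has already been carried out in \cref{prop: boundary map}. The one point that warrants a sentence of care is uniform treatment of the infinite case $d\geq 6$, where the ``formula'' degenerates to the tautology $\infty=\infty$ and one should simply remark that finiteness of $|B_3(d)|$ is equivalent to finiteness of $f(3,d)$, which the earlier propositions pin down to $2\leq d\leq 5$.
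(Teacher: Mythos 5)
Your proposal is correct and matches the paper's own argument: the paper likewise derives the corollary by relabeling the long exact sequence via \cref{prop: truncated braid group is pi1}, \cref{prop: order of triangle group}, and \cref{prop: boundary map}, reading off the order of $\langle\Delta^2\rangle$ from exactness, and multiplying orders across the resulting short exact sequence \eqref{eq: SES central quotient}. No gaps; your extra sentence on the $d\geq 6$ degeneration is a reasonable (if implicit in the paper) point of care.
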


\section{Final remarks}\label{sec: final remarks}
Of course, it would be ideal to give a topological interpretation of the connection between \textit{any} truncated braid group and its associated regular tiling from \cref{thm: Coxeter formula}. Parts of our story work just the same:
\begin{itemize}
    \item The braid group $B_n$ is the fundamental group of the complement of a discriminant variety in $\mathbb C^{n-1}$. This complement deformation retracts to the complement of a knotted subspace of $S^{2n-3}$.
    \item There is an $S^1$ action on $S^{2n-3}$ that preserves the discriminant locus. Generic points have trivial stabilizer and trace out a loop that represents the full twist $\Delta^2 \in B_n$.
    \item The quotient space for the group action $S^{2n-3}/S^1$ is topologically $\mathbb{CP}^{n-2}$, but the projection has singularities for the exceptional orbits. Some subspace is removed for the discriminant locus.
\end{itemize}
One would want to ``sew in'' an order-$d$ orbifold set along the discriminant locus just as we did along the trefoil knot, but the higher dimensional story is more complicated. For instance, the discriminant locus in $S^5$ for the $n=4$ case is an immersed $S^3$ with self-intersection. The self-intersections correspond to collisions of more than two points in the configuration space of points.

Supposing that one could circumvent these difficulties and fill in the discriminant locus, we hope that the orbifold version of $\mathbb{CP}^{n-2}$ would carry a geometric structure. In the finite cases, the orbifold covering space might correspond to so-called ``regular complex polytopes'' as described in \cite{Coxeter1991RegularPolytopes}.

As a final thought, the truncated braid groups are examples of what are known as \emph{complex reflection groups} \cite{Lehrer2009UnitaryGroups}. These are like real reflection groups, i.e. Coxeter groups, except that a ``complex reflection'' can have any finite order, not just order 2. Perhaps the techniques in this article, and the extensions proposed in this section, could be applied to understand complex reflections groups more generally. The first author wonders if there could be some application to understanding Artin groups as well.

\printbibliography

\end{document}